\newtheorem{theorem}{Theorem}[section]
\newtheorem{lemma}[theorem]{Lemma}
\theoremstyle{remark}
\newtheorem{remark}[theorem]{Remark}
\numberwithin{equation}{section}
\DeclareMathOperator{\dist}{dist}
\begin{document}

\begin{frontmatter}

\title{Boundary blow-up solutions to real $(N-1)$-Monge-Amp\`{e}re equations with singular weights}

\author{Kiran Kumar Saha}
\ead{kiran_saha@iitg.ac.in}
\author{Sweta Tiwari\corref{cor1}}
\ead{swetatiwari@iitg.ac.in}
\cortext[cor1]{Corresponding author.}
\address{Department of Mathematics, Indian Institute of Technology Guwahati, Guwahati, \\ Assam 781039, India}

\begin{abstract}
In this paper, we study a boundary blow-up problem for real $(N-1)$-Monge-Amp\`{e}re equations of the form
\begin{equation}
	\nonumber  \left \{
	\begin{aligned}
		&  \operatorname{\det}^{\frac{1}{N-1}}\left(\Delta zI-D^{2}z\right)=K(|x|)f(z) && \text{ in } \Omega,\\
		&  z(x) \to \infty  \text{ as } \dist(x,\partial\Omega) \to 0,
	\end{aligned}
	\right.
\end{equation}
where $\Omega$ denotes a ball in $\mathbb{R}^{N} ~ (N \geq 2)$. The weight function $K$ is allowed to be singular, and the nonlinearity $f$ is assumed to satisfy a Keller-Osserman type condition. We establish the existence of infinitely many radial $(N-1)$-convex solutions to the system by employing the method of sub- and super-solutions, in conjunction with a comparison principle.
\end{abstract}

\begin{keyword}
Real $(N-1)$-Monge-Amp\`{e}re equation \sep Singular weight function \sep Boundary blow-up problem \sep Radial $(N-1)$-convex solutions \sep Multiplicity \sep Sub- and super-solution method

\MSC[2020]{35J60 \sep 35J96 \sep 35B44 \sep 34B18 \sep 34A12}	
\end{keyword}

\end{frontmatter}

\section{Introduction}
\label{SectionIntroduction}

The Monge-Amp\`{e}re equation is a fully nonlinear second-order partial differential equation that arises in geometry, analysis, and applied mathematics, with key applications in the prescribed Gauss curvature problem, optimal transport, geometric optics, and large-scale fluid flows (\cite{TrudingerWang2008, Figalli2017, Gutierrez2001}). Indeed, for a function $z$ of class $C^{2}$, the real Monge-Amp\`{e}re operator $z \mapsto \det \left(D^{2}z\right)$, the determinant of the Hessian matrix $D^{2}z$, is itself a fully nonlinear second-order differential operator, as it depends nonlinearly upon the second-order derivatives. Moreover, it inherently determines the product of the eigenvalues of $D^{2}z$ and is elliptic when $D^{2}z$ is positive definite.

In this paper, we study the existence of infinitely many radial $(N-1)$-convex solutions to the real $(N-1)$-Monge-Amp\`{e}re equation
\begin{equation}\label{Eqne1}
	\left \{
	\begin{aligned}
		&  \operatorname{\det}^{\frac{1}{N-1}}\left(\Delta zI-D^{2}z\right)=K(|x|)f(z) && \text{ in } \Omega,\\
		&  z(x) \to \infty  \text{ as } \dist(x,\partial\Omega) \to 0,
	\end{aligned}
	\right.
\end{equation}
where $\Omega$ is a ball in $\mathbb{R}^{N} ~ (N \geq 2)$, $\operatorname{\det}^{\frac{1}{N-1}}(\cdot)$ is the $(N-1)^{\text{th}}$ root of $\det(\cdot)$, and $I$ is the $N \times N$ identity matrix. Without loss of generality, we assume in \eqref{Eqne1} that $\Omega$ is the unit ball in $\mathbb{R}^{N}$. Here, the weight function $K$, which may have a singularity at $1$, and the nonlinearity $f$ are assumed to satisfy: 
\begin{enumerate}
	\item[$(\mathbf{K})$] $K : [0,1) \to (0,\infty)$ is continuous on $[0,1)$;
	\item[$(\mathbf{f_{1}})$] $f : (0,\infty) \to (0,\infty)$ is nondecreasing and locally Lipschitz continuous on $(0,\infty)$;
	\item[$(\mathbf{f_{2}})$] $f$ satisfies the Keller-Osserman type condition
	\begin{align}\label{SecEqu01}
		\int^{\infty}\left(F(\tau)\right)^{-\frac{N-1}{2N-1}}d\tau=\infty,
	\end{align}
	where
	\begin{align}
		\nonumber  F(\tau) \coloneqq \int_{0}^{\tau}f(s)ds \quad \text{ for } \tau>0.
	\end{align}
\end{enumerate}

As usual, for a function $z \in C^{2}(\mathbb{R}^{N})$, we denote by $\Delta z=\sum_{i=1}^{N}\frac{\partial^{2}z}{\partial x_{i}^{2}}$ and $D^{2}z=\begin{pmatrix}
\frac{\partial^{2}z}{\partial x_{j} \partial x_{k}}
\end{pmatrix}$ for $j,k=1,2,\ldots,N$ the Laplacian and the Hessian matrix of $z$, respectively. Let $\lambda(D^{2}z)=(\lambda_{1},\lambda_{2},\ldots,\lambda_{N})$ be the vector of eigenvalues of $D^{2}z$, and define $\sigma_{i}=\sum\limits_{1 \leq j \leq N, j \neq i} \lambda_{j}$ for $i=1,2,\ldots,N$. Notice that $\sigma=(\sigma_{1},\sigma_{2},\ldots,\sigma_{N})$ is the vector of eigenvalues of $\Delta zI-D^{2}z$. The $(N-1)$-Monge-Amp\`{e}re operator is then given by $\det\left(\Delta zI-D^{2}z\right)=\prod_{i=1}^{N}\sigma_{i}$. For $N=2$, it follows that $\det\left(\Delta zI-D^{2}z\right)=\det\left(D^{2}z\right)$, which is the classical Monge-Amp\`{e}re operator in two dimensions. 
Following \cite{JiangJiLi2025}, a function $z \in C^{2}(\mathbb{R}^{N})$ is called $(N-1)$-convex if, for every $x \in \mathbb{R}^{N}$, the matrix $\Delta zI-D^{2}z>0$. We refer the reader to \cite{JiangJiLi2025, JiDengJiang2025} for further details.

A boundary blow-up problem is one in which a solution satisfies $z(x) \to \infty$ as $x \in \Omega$ and $\dist(x,\partial\Omega) \to 0$. Such a solution is termed a boundary blow-up (or large) solution, and this boundary behavior is often denoted, for conciseness, by $z=\infty$ on $\partial \Omega$. Boundary blow-up problems naturally arise in physics, chemistry, biology, and geometry, modeling phenomena such as singular heat or chemical concentrations, unbounded population densities, and large solutions of Monge-Amp\`{e}re or $k$-Hessian equations (\cite{Harada2015, BelowMailly2003, McCueElHachemSimpson2022}). The study of boundary blow-up problems has a long history, dating back to Bieberbach \cite{Bieberbach1916}, who investigated $\Delta z=e^{z}$ in a smooth bounded domain of $\mathbb{R}^{2}$. Chuaqui et al. \cite{ChuaquiCortazarElguetaFloresLetelierGarciaMelian2003} studied the boundary blow-up elliptic problem $\Delta z=K(x)z^{m}$ with $m>0$, establishing results on the existence, multiplicity, and boundary behavior of positive radial solutions. For further results on elliptic problems with boundary blow-up, see \cite{ChuaquiCortazarElguetaGarciaMelian2004} and the references therein. In $1957$, Keller \cite{Keller1957} and Osserman \cite{Osserman1957} proved independently that, for a positive, continuous, and nondecreasing function $f$, the equation $\Delta z=f(z)$ in $\mathbb{R}^{N}$ admits a large subsolution if and only if $f$ satisfies
\begin{align}
	\nonumber  \int^{\infty}\left(\int_{0}^{\tau}f(s)ds\right)^{-\frac{1}{2}}d\tau = \infty,
\end{align}
which is now known as the Keller-Osserman condition. Chen and Wang \cite{ChenWang2013} analyzed a boundary blow-up $p$-Laplacian problem with a weakly superlinear nonlinearity and improved the asymptotic description of large solutions.

There has been parallel development on boundary blow-up Monge-Amp\`{e}re problems in different settings. Zhang and Feng \cite{ZhangFeng2022} studied the boundary blow-up Monge-Amp\`{e}re problem with a gradient term
\begin{equation}\label{Eqne0001}
	\left \{
	\begin{aligned}
		&  \det\left(D^{2}z\right)=K(x)f(z)|\nabla z|^{q}, \quad q \geq 0, && \text{ in } \Omega,\\
		&  z=\infty && \text{ on } \partial \Omega,
	\end{aligned}
	\right.
\end{equation}
establishing sharp conditions on $K$, $f$, and $q$ for the existence and asymptotic behavior of strictly convex solutions, and deriving corresponding results for strictly convex radial solutions. When $q=0$ in \eqref{Eqne0001}, the problem reduces to the boundary blow-up Monge-Amp\`{e}re problem, which was studied by Zhang and Du \cite{ZhangDu2018} and by Lazer and McKenna \cite{LazerMcKenna1996} for $f(z)=e^{z}$ or $f(z)=z^{p}$ with $p>N$.

The $(N-1)$-Monge-Amp\`{e}re operator, which originated in complex geometry \cite{Gauduchon1984}, has seen a recent surge of interest. For Monge-Amp\`{e}re equations involving $(N-1)$-plurisubharmonic functions on compact K\"{a}hler manifolds, see \cite{TosattiWeinkove2016}. Jiang et al. \cite{JiangJiLi2025} studied the real $(N-1)$-Monge-Amp\`{e}re equation
\begin{align}
	\nonumber  \operatorname{\det}^{\frac{1}{N}}\left(\Delta zI-D^{2}z\right)=K(x)f(z) \quad \text{ in } \mathbb{R}^{N},
\end{align}
establishing necessary and sufficient conditions on $K$ and $f$ for the existence of entire subsolutions, as well as sufficient conditions for entire radial and bounded solutions. In \cite{JiDengJiang2025}, Ji et al. investigated the real $(N-1)$-Monge-Amp\`{e}re boundary blow-up problem
\begin{equation}
	\nonumber  \left \{
	\begin{aligned}
		& \operatorname{\det}^{\frac{1}{N}}\left(\Delta zI-D^{2}z\right)=f(x,z) && \text{ in } \Omega,\\
		&   z=\infty && \text{ on } \partial \Omega,
	\end{aligned}
	\right.
\end{equation}
establishing results on the existence, uniqueness, and asymptotic behavior of boundary blow-up solutions.

The Hessian matrix plays a central role in the formulation of second-order partial differential equations; in particular, its trace and determinant correspond to the Laplacian and Monge-Amp\`{e}re operators, respectively. For $k \in \{1,2,\ldots,N\}$, the $k$-Hessian operator of $z$ is the $k^{\text{th}}$ elementary symmetric function of the eigenvalues of $D^{2}z$,
\begin{align}
	\nonumber  S_{k}\left(\lambda(D^{2}z)\right) \coloneqq \sum_{1 \leq i_{1} < i_{2} < \cdots < i_{k} \leq N} \lambda_{i_{1}} \lambda_{i_{2}} \cdots \lambda_{i_{k}},
\end{align}
which reduces to the Laplacian when $k=1$ and to the Monge-Amp\`{e}re operator when $k=N$. There has been much recent interest in studying $k$-Hessian equations. Ji and Bao \cite{JiBao2010}, in analogy with the Keller-Osserman condition, established a necessary and sufficient condition under which the $k$-Hessian equation $S_{k}^{1/k}\left(\lambda(D^{2}z)\right)=f(z)$ in $\mathbb{R}^{N}$ admits a positive entire subsolution. Zhang and Feng \cite{ZhangFeng2023} studied the boundary blow-up $k$-Hessian problem with a gradient term
\begin{equation}\label{Eqne0002}
	\left \{
	\begin{aligned}
		&  S_{k}\left(\lambda(D^{2}z)\right)=K(x)f(z)|\nabla z|^{q}, \quad q \geq 0, && \text{ in } \Omega,\\
		&  z=\infty && \text{ on } \partial \Omega,
	\end{aligned}
	\right.
\end{equation}
establishing necessary and sufficient conditions for the existence of $k$-convex solutions, investigating their boundary asymptotic behavior, and presenting results for radially symmetric $k$-convex solutions. The special case $q=0$ of problem \eqref{Eqne0002} has also emerged as a cornerstone of contemporary research. In particular, Zhang and Feng \cite{ZhangFeng2019} and Ma and Li \cite{MaLi2019} investigated boundary blow-up problems for $k$-Hessian equations with singular weights. Zhang and Feng \cite{ZhangFeng2018JMAA} considered the weakly superlinear case $f(z)=z^{k}(\ln z)^{p}$ with $p>0$, and later, in \cite{FengZhang2020}, they refined the study by filling certain parameter gaps and allowing singular weights. In \cite{ZhangFeng2018NA}, the focus was on the positive power-type nonlinearity $f(z)=z^{p}$ with $p>0$. Recently, using the sub- and super-solutions method, Zhang et al. \cite{ZhangZhangWangAhmad2023} studied the existence of infinitely many radial $k$-convex solutions to
\begin{equation}
	\nonumber  \left \{
	\begin{aligned}
		&  S_{k}\left(\lambda(D^{2}z-\mu I)\right)=K(|x|)f(z), \quad \mu \geq 0, && \text{ in } \Omega,\\
		&  z=\infty && \text{ on } \partial \Omega,
	\end{aligned}
	\right.
\end{equation}
and Feng and Zhang \cite{FengZhang2023} investigated the existence of infinitely many radial $p$-$k$-convex solutions to
\begin{equation}
	\nonumber  \left \{
	\begin{aligned}
		&  S_{k}\left(\lambda\left(D_{i}(|Dz|^{p-2}D_{j}z)\right)\right)=K(|x|)f(z), \quad p \geq 2, && \text{ in } \Omega,\\
		&  z=\infty && \text{ on } \partial \Omega.
	\end{aligned}
	\right.
\end{equation}

Indeed, research on boundary blow-up solutions to $(N-1)$-Monge-Amp\`{e}re equations is still in its initial stage. Compared with the only existing work \cite{JiDengJiang2025}, the objectives, assumptions, and arguments in this work are entirely different. The method of sub- and super-solutions is a powerful analytical tool for establishing the existence of solutions to nonlinear problems, while also providing bounds by localizing the solution between the chosen sub- and super-solutions. The Keller-Osserman type condition $(\mathbf{f_{2}})$ plays a key role in our analysis and, as will be seen later, enables the construction of suitable sub- and super-solutions.

This paper is organized as follows. The necessary preliminaries are given in Section \ref{SectionPreliminaries}. Section \ref{SectionAnIVPandaComparisonPrinciple} presents an existence and uniqueness result for an IVP on $(0,1)$ and establishes a comparison principle for the corresponding differential inequalities. Finally, in Section \ref{SectionMultiplicityofRadialN1convexSolutions}, we prove the existence of infinitely many radial $(N-1)$-convex solutions to \eqref{Eqne1} by the method of sub- and super-solutions.

\section{Preliminaries}
\label{SectionPreliminaries}

The purpose of this section is to present the notations and preliminary facts. Throughout this paper, we denote $r=|x|=\sqrt{\sum_{i=1}^{N}x_{i}^{2}}$, where $x=(x_{1},x_{2},\ldots,x_{N}) \in \mathbb{R}^{N}$, and for $R>0$, define $B_{R}=\left\{x \in \mathbb{R}^{N} : |x|<R \right\}$.

For any closed bounded interval $[a,b] \subset \mathbb{R}$, the space $E=C[a,b]$ of all continuous functions $u : [a,b] \to \mathbb{R}$, with the supremum norm $\|u\|_{E}=\sup\{|u(\tau)| : \tau \in [a,b]\}$, is a Banach space.

The following lemma, together with its proof, is presented in \cite[Lemma $2.1$]{JiangJiLi2025} and plays a key role in transforming \eqref{Eqne1} into a form convenient for analysis.
\begin{lemma}\label{EVLemma001}
	Let $\zeta \in C^{2}[0,R)$ with $\zeta'(0)=0$. Then for $z(x) \coloneqq \zeta(r)$, $r=|x|$, we have $z \in C^{2}(B_{R})$, and the eigenvalues of $\Delta zI-D^{2}z$ are given by
	\begin{equation}
		\begin{split}
			\nonumber  &  \lambda \left(\Delta zI-D^{2}z\right)\\
			=&  \left \{
			\begin{aligned}
				& \left(\frac{N-1}{r}\zeta'(r),\zeta''(r)+\frac{N-2}{r}\zeta'(r),\ldots,\zeta''(r)+\frac{N-2}{r}\zeta'(r)\right), && r \in (0,R), \\
				& \left((N-1)\zeta''(0),(N-1)\zeta''(0),\ldots,(N-1)\zeta''(0)\right), && r=0.
			\end{aligned}
			\right. \nonumber
		\end{split}
	\end{equation}
	Furthermore, we have
	\begin{equation}
		\nonumber \det\left(\Delta zI-D^{2}z\right)=\left \{
		\begin{aligned}
			& \frac{N-1}{r}\zeta'(r)\left[\zeta''(r)+\frac{N-2}{r}\zeta'(r)\right]^{N-1}, && r \in (0,R), \\
			& \left((N-1)\zeta''(0)\right)^{N}, && r=0.
		\end{aligned}
		\right.
	\end{equation}
\end{lemma}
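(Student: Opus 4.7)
The plan is to reduce everything to a direct computation of $D^{2}z$ for the radial function $z(x)=\zeta(r)$, then diagonalize the resulting matrix by exploiting its rank-$1$ structure. First I would apply the chain rule twice to obtain, for $r>0$,
\begin{align*}
\partial_{j}\partial_{k}z(x)=\Bigl(\zeta''(r)-\tfrac{\zeta'(r)}{r}\Bigr)\tfrac{x_{j}x_{k}}{r^{2}}+\tfrac{\zeta'(r)}{r}\delta_{jk},
\end{align*}
so that $D^{2}z=\alpha(r)\,P_{x}+\beta(r)\,I$ with $\alpha(r)=\zeta''(r)-\zeta'(r)/r$, $\beta(r)=\zeta'(r)/r$, and $P_{x}=xx^{T}/r^{2}$ the orthogonal projection onto $\mathbb{R}x$. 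Taking traces gives $\Delta z=\zeta''(r)+\tfrac{N-1}{r}\zeta'(r)$, and therefore
\begin{align*}
\Delta zI-D^{2}z=\Bigl(\zeta''(r)+\tfrac{N-2}{r}\zeta'(r)\Bigr)I-\alpha(r)P_{x}.
\end{align*}

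Next I would diagonalize this matrix. Since $P_{x}$ has eigenvalue $1$ in the radial direction $x/r$ and eigenvalue $0$ on the $(N-1)$-dimensional tangent space $x^{\perp}$, the eigenvalues of $\Delta zI-D^{2}z$ split accordingly: in the radial direction we get $\zeta''(r)+\tfrac{N-2}{r}\zeta'(r)-\alpha(r)=\tfrac{N-1}{r}\zeta'(r)$, while in each tangential direction we get $\zeta''(r)+\tfrac{N-2}{r}\zeta'(r)$ with multiplicity $N-1$. This yields the stated formula on $(0,R)$, and the determinant formula then follows immediately as the product of these eigenvalues.

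For the endpoint $r=0$ I would argue separately, since the formulas above involve $1/r$. The hypothesis $\zeta'(0)=0$ together with $\zeta\in C^{2}[0,R)$ gives $\zeta'(r)/r\to\zeta''(0)$ as $r\to 0^{+}$ by a first-order Taylor expansion, so both $\alpha(r)\to 0$ and $\beta(r)\to\zeta''(0)$ continuously. Therefore the coefficient matrix extends continuously to $D^{2}z(0)=\zeta''(0)I$, establishing $z\in C^{2}(B_{R})$, and $\Delta z(0)I-D^{2}z(0)=(N-1)\zeta''(0)I$, which gives the claimed eigenvalue and determinant values at the origin.

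The only delicate point is the $r=0$ case: one must verify that the Hessian extends continuously despite the apparent singularity of $\zeta'(r)/r$ and $\zeta''(r)-\zeta'(r)/r$, and this is exactly where the condition $\zeta'(0)=0$ is used. Everything else is elementary linear algebra, so I do not anticipate any further difficulty; the argument works uniformly in $N\ge 2$.
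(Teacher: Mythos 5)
Your proof is correct, and it is the standard direct computation: the rank-one decomposition $D^{2}z=\alpha(r)P_{x}+\beta(r)I$ with eigenvalue splitting along $x$ and $x^{\perp}$, plus the limit $\zeta'(r)/r\to\zeta''(0)$ at the origin; note that the paper does not prove this lemma itself but imports it, with proof, from Lemma 2.1 of \cite{JiangJiLi2025}, and your argument is exactly the intended one (the origin limit is also recorded in the paper's Remark 2.2). The only step stated a bit loosely is the claim $z\in C^{2}(B_{R})$: continuity of the off-origin formula's limit does not by itself give the existence of $\partial_{j}\partial_{k}z(0)$, so strictly you should also check the second-order difference quotients at $0$ (using $\partial_{j}z(x)=\zeta'(r)x_{j}/r$ and $\partial_{j}z(0)=0$) or invoke the standard mean-value-theorem criterion that a derivative extending continuously to a point exists there; this is routine and uses $\zeta'(0)=0$ exactly where you say it does.
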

\begin{remark}
	Since $\zeta \in C^{2}[0,R)$ with $\zeta'(0)=0$, it follows that
	\begin{align}
		\nonumber  \lim_{r \to 0}\frac{\zeta'(r)}{r}=\lim_{r \to 0}\frac{\zeta'(r)-\zeta'(0)}{r-0}=\zeta''(0).
	\end{align}
	Therefore, we have $\left.\frac{\zeta'(r)}{r}\right|_{r=0}=\zeta''(0)$.
\end{remark}
We have the following useful result.
\begin{theorem}[The Keller-Osserman type condition]
	If $f : (0,\infty) \to (0,\infty)$ is continuous and nondecreasing on $(0,\infty)$, then the real $(N-1)$-Monge-Amp\`{e}re equation
	\begin{align}
		\nonumber  \operatorname{\det}^{\frac{1}{N-1}}\left(\Delta zI-D^{2}z\right)=f(z) \quad \text{ in } \mathbb{R}^{N}
	\end{align}
	admits a positive entire subsolution $z \in C^{2}(\mathbb{R}^{N})$ if and only if
	\begin{align}
		\nonumber  \int^{\infty}\left(\int_{0}^{\tau}f(s)ds\right)^{-\frac{N-1}{2N-1}}d\tau=\infty.
	\end{align}
\end{theorem}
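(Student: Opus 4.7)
The plan is to reduce to the radial ODE via Lemma~\ref{EVLemma001} and derive a Pohozaev-type energy identity from which the exponent $(N-1)/(2N-1)$ emerges naturally.

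\textbf{Radial reduction.} For $z(x)=\zeta(|x|)$ with $\zeta'(0)=0$, Lemma~\ref{EVLemma001} together with $\zeta''+\tfrac{N-2}{r}\zeta'=r^{-(N-2)}(r^{N-2}\zeta')'$ recasts the equation, via $g(r):=r^{N-2}\zeta'(r)$, as the first-order ODE $\bigl((N-1)g\bigr)^{1/(N-1)}g'(r)=r^{N-1}f(\zeta)$, which integrates once to $\tfrac{(N-1)^{N/(N-1)}}{N}g(r)^{N/(N-1)}=\int_0^r s^{N-1}f(\zeta(s))\,ds$.

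\textbf{Sufficiency.} Fix $\zeta(0)=c>0$; Picard-type arguments give a local positive solution $\zeta$ with $\zeta'>0$, so $(N-1)$-convexity is automatic. For global extension, multiply the first-order ODE by $g$, recognize the left side as a total derivative, and integrate by parts on the right to obtain
\[
\tfrac{(N-1)^{N/(N-1)}}{2N-1}\,g(r)^{(2N-1)/(N-1)}=r^{2N-3}F(\zeta(r))-(2N-3)\int_0^r\! s^{2N-4}F(\zeta(s))\,ds.
\]
Dropping the nonnegative subtractive term and using the algebraic identity $(2N-3)(N-1)/(2N-1)-(N-2)=1/(2N-1)$ yields $\zeta'(r)\leq C_N\,r^{1/(2N-1)}F(\zeta(r))^{(N-1)/(2N-1)}$; separating variables,
\[
\int_{c}^{\zeta(r)}\!\frac{d\tau}{F(\tau)^{(N-1)/(2N-1)}}\leq C_N'\,r^{2N/(2N-1)}.
\]
Under $(\mathbf{f_2})$ the left side blows up as $\zeta(r)\to\infty$, so $\zeta$ stays bounded on every compact $r$-interval; continuation produces a positive radial entire solution, a fortiori a subsolution.

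\textbf{Necessity.} Conversely, suppose $(\mathbf{f_2})$ fails and let $z$ be a positive entire subsolution. The $(N-1)$-convexity forces $(N-1)\Delta z=\mathrm{tr}(\Delta zI-D^2z)>0$, so $z$ is subharmonic and $M(r):=\max_{|x|\leq r}z$ is continuous and nondecreasing. For each $R>0$ a comparison principle for the $(N-1)$-Monge-Amp\`ere operator --- of the type the paper develops in Section~\ref{SectionAnIVPandaComparisonPrinciple} --- compares $z$ on $B_R$ with a radial super-solution $\bar\zeta_R$, giving $z\leq\bar\zeta_R$ on $B_R$. A lower-bound companion to the sufficiency estimate --- obtained by iterating the first-order ODE, starting from the quadratic-growth bound $\zeta(r)\geq c+f(c)^{(N-1)/N}r^2/(2(N-1))$ that already follows from $g^{N/(N-1)}\geq(f(c)/N)r^N$ --- shows that when $(\mathbf{f_2})$ fails these radial barriers blow up at a uniformly bounded radius, so for large $R$ the bound on $z$ becomes incompatible with $z\in C^2(\mathbb{R}^N)$, yielding the desired contradiction.

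\textbf{Main obstacle.} The delicate step is the necessity direction. Full nonlinearity of $\det^{1/(N-1)}(\cdot)$ blocks a direct spherical-averaging reduction (the relevant Jensen inequality points the wrong way), and an AM-GM reduction $\Delta z\geq c_1 f(z)^{(N-1)/N}$ to the classical Laplace Keller-Osserman aligns the exponents only for regularly-varying $f$. The proper substitute is the $(N-1)$-Monge-Amp\`ere comparison principle of Section~\ref{SectionAnIVPandaComparisonPrinciple}, applied between $z$ and a radial barrier. The one algebraically delicate point is the identity $(2N-3)(N-1)/(2N-1)-(N-2)=1/(2N-1)$, which encodes how the Keller-Osserman exponent $(N-1)/(2N-1)$ arises from the interplay between the $(N-1)$-th root on the operator and the radial volume element $r^{N-1}$ in the integration by parts.
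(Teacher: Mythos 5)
The paper's own ``proof'' is a one-line deferral to \cite[Theorem 1.1]{JiangJiLi2025}, so your attempt has to stand on its own. Your sufficiency half does: the reduction via Lemma \ref{EVLemma001} to $\bigl((N-1)g\bigr)^{1/(N-1)}g'=r^{N-1}f(\zeta)$ with $g=r^{N-2}\zeta'$, the multiplication by $g$, the integration by parts, and the exponent identity $(2N-3)(N-1)/(2N-1)-(N-2)=1/(2N-1)$ are all correct and give $\int_{c}^{\zeta(r)}F(\tau)^{-(N-1)/(2N-1)}d\tau\le C r^{2N/(2N-1)}$, which together with local existence and continuation (Theorem \ref{EEPRSTheorem01} with $K\equiv 1$) yields an entire radial solution under $(\mathbf{f_{2}})$. (Small gloss: $(N-1)$-convexity also needs $\zeta''+\tfrac{N-2}{r}\zeta'>0$, which follows from the ODE and $f>0$, not from $\zeta'>0$ alone.)

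The necessity half has genuine gaps. First, the contradiction you describe is a non sequitur: if $z\le\bar\zeta_R$ and the radial barrier blows up at a finite radius, nothing prevents $z$ from being a finite entire function --- an upper bound by a function tending to $+\infty$ can never be ``incompatible with $z\in C^{2}(\mathbb{R}^{N})$''. The actual contradiction (as in \cite{JiBao2010,JiangJiLi2025}) is obtained at the center: assuming the integral converges, take the radial solution $u_{a}$ of the exact equation with central value $a<z(0)$; it blows up at a finite radius $T(a)$, hence dominates $z$ near $\partial B_{T(a)}$, and a comparison principle on the ball forces $z(0)\le u_{a}(0)=a<z(0)$. Second, the comparison principle you invoke is not available in the needed form: Theorem \ref{EPEBRSTheorem01} compares two \emph{radial} functions through the ODE, whereas the entire subsolution $z$ in the necessity direction is an arbitrary $C^{2}$ function; comparing it with a radial blow-up barrier requires a PDE comparison principle for the $(N-1)$-Monge-Amp\`{e}re operator on balls (using admissibility/ellipticity), which the paper never proves and which Section \ref{SectionAnIVPandaComparisonPrinciple} does not supply --- you yourself note that spherical averaging is blocked, but then lean on a result that only applies to radial competitors. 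Third, the ``lower-bound companion'' estimate (finite blow-up radius of the radial IVP solution when $(\mathbf{f_{2}})$ fails) is asserted, not proved: reversing your integrated identity by $F(\zeta(s))\le F(\zeta(r))$ gives only the trivial bound $\ge 0$, while integrating from some $r_{0}>0$ leaves a factor $r^{-(N-2)}$ whose integral converges for $N\ge 4$, so separation of variables no longer forces blow-up; naive iteration from the quadratic lower bound does not produce the sharp reverse Keller-Osserman estimate either, and the claimed uniformity of the blow-up radius over all central values is neither needed nor justified. These missing steps are precisely the substance of the cited proof, so as written the ``only if'' direction is not established.
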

\begin{proof}
	The proof is analogous to that of \cite[Theorem $1.1$]{JiangJiLi2025}.
\end{proof}
It should be noted that many of the integrals defined later are motivated by the Keller-Osserman type condition.

In the radially symmetric setting, the next result establishes an equivalence between radial solutions to \eqref{Eqne1} and solutions to a second-order ordinary differential equation with boundary conditions. This result is an immediate consequence of Lemma \ref{EVLemma001}.
\begin{lemma}\label{EVLemma002}
	Let $u \in C^{2}[0,1)$ with $u'(0)=0$. Define $z(x) \coloneqq u(r)$, where $r=|x|$. Then $z \in C^{2}(B_{1})$ is a radial solution to \eqref{Eqne1} if and only if $u$ is a solution to
	\begin{equation}\label{BBPEquation}
		\left \{
		\begin{aligned}
			& \left(\frac{N-1}{r}u'(r)\right)^{\frac{1}{N-1}}\left[u''(r)+\frac{N-2}{r}u'(r)\right]=K(r)f(u(r)), \quad r \in (0,1),\\
			& u'(0)=0, \quad u(1)=\infty.
		\end{aligned}
		\right.
	\end{equation}
\end{lemma}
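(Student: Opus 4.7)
The plan is to derive the equivalence directly from Lemma \ref{EVLemma001} applied to $\zeta = u$ with $R = 1$, splitting the argument into the two implications and handling the origin and the boundary separately.

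First I would observe that, since $u \in C^{2}[0,1)$ with $u'(0)=0$, Lemma \ref{EVLemma001} immediately yields $z \in C^{2}(B_{1})$ together with an explicit formula for $\det(\Delta zI-D^{2}z)$ at every $r \in [0,1)$. For $r \in (0,1)$, all entries of the eigenvalue vector listed in Lemma \ref{EVLemma001} must be positive (this is the $(N-1)$-convexity that makes the $(N-1)$-th root of the determinant well-defined and ensures the operator is elliptic); in particular $\frac{N-1}{r}u'(r) > 0$ and $u''(r)+\frac{N-2}{r}u'(r) > 0$, so I can factor the determinant inside the root and obtain
\begin{align}
\nonumber \operatorname{\det}^{\frac{1}{N-1}}\!\left(\Delta zI-D^{2}z\right) = \left(\frac{N-1}{r}u'(r)\right)^{\frac{1}{N-1}}\!\left[u''(r)+\frac{N-2}{r}u'(r)\right].
\end{align}
This identity, combined with $K(|x|)f(z(x)) = K(r)f(u(r))$, matches the left- and right-hand sides of the PDE in \eqref{Eqne1} on $B_{1}\setminus\{0\}$ with those of the ODE in \eqref{BBPEquation} on $(0,1)$.

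Next I would translate the boundary data. The condition $z(x) \to \infty$ as $\dist(x,\partial\Omega) \to 0$ means $u(r) \to \infty$ as $r \to 1^{-}$, which is precisely the notation $u(1)=\infty$. Conversely, $u(1)=\infty$ gives back the blow-up condition on $\partial B_{1}$ because $\dist(x,\partial B_{1}) = 1-|x|$. The condition $u'(0)=0$ is already part of both formulations and corresponds to the $C^{2}$-regularity of the radial profile at the origin.

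For the two implications themselves, one direction is immediate: if $z$ solves \eqref{Eqne1}, evaluating at $x$ with $|x|=r \in (0,1)$ and using the factorization above produces \eqref{BBPEquation}. For the converse, if $u$ solves \eqref{BBPEquation}, then $z(x):=u(r)$ satisfies the PDE pointwise on $B_{1}\setminus\{0\}$; the remaining verification at $r=0$ follows from the second case in Lemma \ref{EVLemma001} together with the remark $u'(r)/r\big|_{r=0}=u''(0)$, which gives $\operatorname{\det}^{1/(N-1)}(\Delta zI-D^{2}z)\big|_{x=0} = ((N-1)u''(0))^{N/(N-1)}$ and matches the limit of the right-hand side by continuity of $K$ and $f$. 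The only mild technical point I would flag is this check at $r=0$: one must ensure that both sides of the ODE admit a common limit as $r\to 0^{+}$ so that the PDE holds at the origin; this is a routine application of the cited remark and of the assumption $u \in C^{2}[0,1)$, and is the closest thing to an obstacle in an otherwise direct translation.
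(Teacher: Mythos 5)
Your argument is correct and follows exactly the route the paper intends: the paper gives no separate proof of Lemma \ref{EVLemma002}, stating only that it is an immediate consequence of Lemma \ref{EVLemma001}, and your factorization of $\det(\Delta zI-D^{2}z)$, translation of the blow-up condition, and check at $r=0$ via the remark $\zeta'(r)/r|_{r=0}=\zeta''(0)$ are precisely that intended deduction, spelled out in more detail (including the sign discussion ensuring the $(N-1)$-th root matches the ODE's left-hand side).
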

In the following section, we analyze the resulting ordinary differential equation.

\section{An IVP and a Comparison Principle}
\label{SectionAnIVPandaComparisonPrinciple}

This section presents an existence and uniqueness result for an IVP and provides a comparison principle. We transform the IVP into an equivalent integral equation in a space of continuous functions, and to prove that the associated integral operator possesses a unique fixed point, we employ the Banach contraction principle\footnote[1]{Let $\mathfrak{B}$ be a nonempty closed subset of a Banach space $\mathfrak{X}$. If $\mathfrak{G} : \mathfrak{B} \to \mathfrak{B}$ is a contractive map, then $\mathfrak{G}$ has a unique fixed point in $\mathfrak{B}$.}.

Let $u_{0}>0$. Consider an IVP of the form
\begin{equation}\label{IVPEquation}
	\left \{
	\begin{aligned}
		& \left(\frac{N-1}{r}u'(r)\right)^{\frac{1}{N-1}}\left[u''(r)+\frac{N-2}{r}u'(r)\right]=K(r)f(u(r)), \quad r \in (0,1),\\
		& u(0)=u_{0}, \quad u'(0)=0.
	\end{aligned}
	\right.
\end{equation}
Our first result is as follows.
\begin{theorem}\label{EEPRSTheorem01}
	Suppose that conditions $(\mathbf{K})$ and $(\mathbf{f_{1}})$ hold. Then for every $u_{0}>0$, the IVP \eqref{IVPEquation} admits a unique solution $u \in C^{2}[0,T)$ on a maximal interval of existence $[0,T) \subset [0,1)$. Furthermore, $u'(r)>0$ for all $r \in (0,T)$, $u''(r)>0$ for all $r \in [0,T)$, and $u(r) \to \infty$ as $r \to T$ if $T<1$.
\end{theorem}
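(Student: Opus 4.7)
The plan is to convert the IVP into a fixed-point equation for an integral operator, obtain a unique local solution by the Banach contraction principle, and then push the solution to its maximal interval of existence by a continuation argument; the qualitative properties will fall out of the integral representation. First I would introduce the change of variables $g(r) := r^{N-2} u'(r)$, under which $u''(r) + \frac{N-2}{r} u'(r) = r^{-(N-2)} g'(r)$. The ODE then rearranges into $\bigl((N-1)g(r)\bigr)^{1/(N-1)} g'(r) = r^{N-1} K(r) f(u(r))$, whose left-hand side is $\frac{1}{N} \frac{d}{dr}\bigl[((N-1)g(r))^{N/(N-1)}\bigr]$. Integrating from $0$ (using $g(0) = 0$, which follows from $u'(0)=0$) and then once more produces the equivalent integral equation
\[
  u(r) = u_{0} + \frac{1}{N-1} \int_{0}^{r} \frac{1}{t^{N-2}} \Bigl[N \int_{0}^{t} s^{N-1} K(s) f(u(s))\, ds\Bigr]^{(N-1)/N} dt.
\]
Near $t = 0$ the inner integral is of order $t^{N}$, its $(N-1)/N$-th power is of order $t^{N-1}$, and the factor $t^{-(N-2)}$ leaves an integrand of order $t$, so the right-hand side defines a continuous operator $\Phi : C[0,R] \to C[0,R]$ for every $R \in (0,1)$.

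Next I would apply the Banach contraction principle to $\Phi$ on the closed set $\mathcal{B} := \{u \in C[0,R] : u_{0} \le u(r) \le u_{0}+1\}$ for sufficiently small $R$. Self-mapping follows because the above size estimate gives $(\Phi u)(r) - u_{0} = O(R^{2})$. For contractivity I would combine the local Lipschitz estimate on $f$ from $(\mathbf{f_{1}})$, the two-sided bounds $0 < K_{\min} \le K(r) \le K_{\max}$ on $[0,R]$ (from $(\mathbf{K})$), the H\"{o}lder-type inequality $|x^{\alpha} - y^{\alpha}| \le \alpha \min(x,y)^{\alpha-1} |x-y|$ with $\alpha = (N-1)/N \in (0,1)$, and the crucial lower bound $\int_{0}^{t} s^{N-1} K(s) f(u(s))\, ds \ge c\, t^{N}$ coming from $u \ge u_{0}$. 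These ingredients combine to yield $\|\Phi u - \Phi v\|_{\infty} \le C R^{2} \|u-v\|_{\infty}$, a contraction once $R$ is small. Differentiating the integral formula twice shows that the resulting fixed point lies in $C^{2}[0,R]$, and a direct computation confirms that it solves the IVP.

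Finally, positivity of $u'$ on $(0,R]$ is immediate from $u'(r) = g(r)/r^{N-2} > 0$; expanding the integral formula at the origin gives $u''(0) = [K(0) f(u_{0})]^{(N-1)/N}/(N-1) > 0$, and for $r > 0$ one would extract $u''(r) > 0$ by solving the ODE for $u''$ and comparing its two terms via the integral representation. Setting $T := \sup\{R \in (0,1] : \text{a solution exists on } [0,R)\}$, if $T < 1$ and $u$ were bounded on $[0,T)$, then $g$, $u'$, and $u''$ would remain bounded as well, so the Picard--Lindel\"{o}f theorem applied to the regular ODE at an interior point close to $T$ would extend the solution past $T$, contradicting the maximality of $T$. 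Hence $u(r) \to \infty$ as $r \to T^{-}$. The main technical obstacle is the contraction estimate: the singular factor $t^{-(N-2)}$ combined with the non-Lipschitz H\"{o}lder map $x \mapsto x^{(N-1)/N}$ near zero must be tamed by the uniform lower bound on the inner integral, and arranging the exponents so that the integrand is integrable and the operator norm is small is where the delicate accounting lies.
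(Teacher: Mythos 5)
Your route is essentially the paper's: the same equivalent integral equation
$u(r)=u_{0}+\int_{0}^{r}\frac{t^{2-N}}{N-1}\bigl(\int_{0}^{t}N\tau^{N-1}K(\tau)f(u(\tau))\,d\tau\bigr)^{\frac{N-1}{N}}dt$,
a Banach fixed-point argument in which the non-Lipschitz power $x\mapsto x^{\frac{N-1}{N}}$ is tamed by the a priori lower bound on the inner integral (the paper uses the mean value theorem with the bounds $K_{*}$ and $m=f(u_{0}-h^{*})$ exactly where you use the H\"older-type inequality and the bound $\ge c\,t^{N}$ — the same estimate), the same computations giving $u\in C^{2}$, $u'(0)=0$ and $u''(0)=\frac{1}{N-1}(K(0)f(u_{0}))^{\frac{N-1}{N}}>0$, and the same continuation: away from the origin the equation is a regular first-order system, locally Lipschitz because $u'$ stays positive, so the solution extends until $r=1$ or $u$ blows up. Your choice of the ball $\{u_{0}\le u\le u_{0}+1\}$ versus the paper's $\|u-u_{0}\|_{X}\le h$ and your derivation of the integral equation via $g=r^{N-2}u'$ are cosmetic differences.

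The one step of your sketch that is not yet an argument is the assertion that $u''(r)>0$ for all $r>0$ ``by comparing the two terms'' of
$u''(r)=\frac{2-N}{N-1}r^{1-N}I(r)^{\frac{N-1}{N}}+rK(r)f(u(r))I(r)^{-\frac{1}{N}}$, where $I(r)=\int_{0}^{r}N\tau^{N-1}K(\tau)f(u(\tau))\,d\tau$. That comparison is equivalent to $r^{N}K(r)f(u(r))>\frac{N-2}{N-1}I(r)$, which for $N\ge 3$ does not follow from $(\mathbf{K})$ and $(\mathbf{f_{1}})$ alone: it holds when $K$ is nondecreasing (then $I(r)\le r^{N}K(r)f(u(r))$ since $u$ is increasing and $f$ nondecreasing), but it can fail just past a steep drop of a continuous positive $K$ (take $f\equiv 1$, $N=3$, $K\approx 1$ on $[0,r_{0}]$ then $K\approx\varepsilon$). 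So ``comparing the two terms'' needs either an extra hypothesis on $K$ or a different argument; be aware, though, that the paper's own proof asserts $u''>0$ with no more justification than you give, so you are not behind it on this point. Note also that for the intended application only $u'>0$ and $u''+\frac{N-2}{r}u'>0$ are needed for $(N-1)$-convexity, and the latter is immediate from the equation itself.
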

\begin{proof}
	We begin by noting that \eqref{IVPEquation} is equivalent to
	\begin{align}\label{IE01}
		u(r)=u_{0}+\int_{0}^{r}\frac{t^{2-N}}{N-1}\left(\int_{0}^{t}N\tau^{N-1}K(\tau)f(u(\tau))d\tau\right)^{\frac{N-1}{N}}dt.
	\end{align}
	In other words, every solution to the integral equation \eqref{IE01} is also a solution to the IVP \eqref{IVPEquation}, and vice versa.

	We first prove that \eqref{IVPEquation} admits a unique solution defined on $[0,h]$ for some sufficiently small $h>0$. We set $X=C[0,h]$ and define an operator $\mathcal{G} : X \to X$ by
	\begin{align}
		\nonumber \left(\mathcal{G}u\right)(r)=u_{0}+\int_{0}^{r}\frac{t^{2-N}}{N-1}\left(\int_{0}^{t}N\tau^{N-1}K(\tau)f(u(\tau))d\tau\right)^{\frac{N-1}{N}}dt
	\end{align}
	for all $r \in [0,h]$. We need to prove that the operator $\mathcal{G}$ has a unique fixed point in $X$. The proof is based on the Banach contraction principle. It suffices to show that $\mathcal{G}$ is contractive on a suitable subset of $X$.

	Denote
	\begin{align}
		\nonumber  K_{*}=\inf\left\{K(\tau) : \tau \in \left[0,\tfrac{1}{2}\right]\right\} \quad \text{and} \quad K^{*}=\sup\left\{K(\tau) : \tau \in \left[0,\tfrac{1}{2}\right]\right\}.
	\end{align}
	We introduce a set
	\begin{align}
		\nonumber  B_{h}(u_{0})=\left\{u \in X : \|u-u_{0}\|_{X} \leq h \right\}.
	\end{align}
	Choose $h^{*} \in \left(0,\tfrac{1}{2}\right)$ such that $u_{0}-h^{*}>0$. Since $f$ is a Lipschitz continuous on $[u_{0}-h^{*},u_{0}+h^{*}] \subset (0,\infty)$, there exists a constant $L>0$ such that
	\begin{align}
		\nonumber  |f(u_{1})-f(u_{2})| \leq L|u_{1}-u_{2}| \quad \text{ for every } u_{1},u_{2} \in [u_{0}-h^{*},u_{0}+h^{*}].
	\end{align}
	We use the monotonicity of $f$ and deduce 
	\begin{align}
		\nonumber  m \coloneqq f(u_{0}-h^{*}) \leq f(u) \leq f(u_{0}+h^{*}) \leq Lh^{*}+f(u_{0}) \eqqcolon M
	\end{align}
	for every $u \in [u_{0}-h^{*},u_{0}+h^{*}]$. It follows that there exists a sufficiently small constant $h^{\star} \in (0,h^{*})$ such that
	\begin{align}
		\nonumber  \frac{1}{2(N-1)}h^{2}\left(K^{*}M\right)^{\frac{N-1}{N}} \leq h \quad \text{ for all } h \in (0,h^{\star}].
	\end{align}
	We claim that for all $h \in (0,h^{\star}]$, $\mathcal{G}\left(B_{h}(u_{0})\right) \subset B_{h}(u_{0})$. In fact, for such an $h$ and every $u \in B_{h}(u_{0})$, it holds that
	\begin{align}
		\nonumber  |\left(\mathcal{G}u\right)(r)-u_{0}| &= \int_{0}^{r}\frac{t^{2-N}}{N-1}\left(\int_{0}^{t}N\tau^{N-1}K(\tau)f(u(\tau))d\tau\right)^{\frac{N-1}{N}}dt \\
		& \leq \int_{0}^{r}\frac{t^{2-N}}{N-1}\left(\int_{0}^{t}N\tau^{N-1}K^{*}Md\tau\right)^{\frac{N-1}{N}}dt \nonumber\\
		& = \frac{1}{N-1}\left(NK^{*}M\right)^{\frac{N-1}{N}}\int_{0}^{r}t^{2-N}\left(\int_{0}^{t}\tau^{N-1}d\tau\right)^{\frac{N-1}{N}}dt \nonumber\\
		& = \frac{1}{N-1}\left(K^{*}M\right)^{\frac{N-1}{N}}\frac{r^{2}}{2} \nonumber
	\end{align}
	for all $r \in [0,h]$. Taking the supremum over $r \in [0,h]$, this gives
	\begin{align}
		\nonumber  \|\mathcal{G}u-u_{0}\|_{X} \leq \frac{1}{2(N-1)}h^{2}\left(K^{*}M\right)^{\frac{N-1}{N}} \leq h,
	\end{align}
	which implies that $\mathcal{G}$ maps the set $B_{h}(u_{0})$ into itself.

	We now prove that the operator $\mathcal{G}$ is contractive on $B_{h}(u_{0})$ for all small $h>0$. For $h \in (0,h^{\star}]$ and every $u_{1}, u_{2} \in B_{h}(u_{0})$, we apply the mean value theorem and obtain 
	\begin{align}
		\nonumber  J(t) & \coloneqq \left(\int_{0}^{t}N\tau^{N-1}K(\tau)f(u_{1}(\tau))d\tau\right)^{\frac{N-1}{N}}-\left(\int_{0}^{t}N\tau^{N-1}K(\tau)f(u_{2}(\tau))d\tau\right)^{\frac{N-1}{N}}\\
		& = \frac{N-1}{N}\left(\int_{0}^{t}N\tau^{N-1}K(\tau)\left[\xi f(u_{1}(\tau))+(1-\xi)f(u_{2}(\tau))\right]d\tau\right)^{-\frac{1}{N}} \nonumber\\
		& ~~~ \times \int_{0}^{t}N\tau^{N-1}K(\tau)\left[f(u_{1}(\tau))-f(u_{2}(\tau))\right]d\tau \nonumber
	\end{align}
	with some $\xi=\xi(t) \in (0,1)$. Under the given conditions, for all $t \in [0,h]$,
	\begin{align}
		\nonumber  |J(t)| & \leq \frac{N-1}{N}\left(\int_{0}^{t}N\tau^{N-1}K_{*}md\tau\right)^{-\frac{1}{N}}\int_{0}^{t}N\tau^{N-1}K^{*}L|u_{1}(\tau)-u_{2}(\tau)|d\tau \\
		& \leq \frac{N-1}{N}\left(NK_{*}m\int_{0}^{t}\tau^{N-1}d\tau\right)^{-\frac{1}{N}}NK^{*}L\int_{0}^{t}\tau^{N-1}d\tau \|u_{1}-u_{2}\|_{X} \nonumber\\
		& = \frac{N-1}{N}\left(K_{*}m\right)^{-\frac{1}{N}}K^{*}Lt^{N-1}\|u_{1}-u_{2}\|_{X}. \nonumber
	\end{align}
	Now for all $r \in [0,h]$, it follows that
	\begin{align}
		\nonumber  |\left(\mathcal{G}u_{1}\right)(r)-\left(\mathcal{G}u_{2}\right)(r)| & = \left|\int_{0}^{r}\frac{t^{2-N}}{N-1}\left[\left(\int_{0}^{t}N\tau^{N-1}K(\tau)f(u_{1}(\tau))d\tau\right)^{\frac{N-1}{N}}\right.\right.\\
		& ~~~ -\left.\left.\left(\int_{0}^{t}N\tau^{N-1}K(\tau)f(u_{2}(\tau))d\tau\right)^{\frac{N-1}{N}}\right]dt\right| \nonumber\\
		& \leq \int_{0}^{r}\frac{t^{2-N}}{N-1}|J(t)|dt \nonumber\\
		& \leq \frac{1}{2N}\left(K_{*}m\right)^{-\frac{1}{N}}K^{*}Lr^{2}\|u_{1}-u_{2}\|_{X}. \nonumber
	\end{align}
	Taking the supremum over $r \in [0,h]$, this gives
	\begin{align}
		\nonumber  \|\mathcal{G}u_{1}-\mathcal{G}u_{2}\|_{X} \leq \frac{1}{2N}h^{2}\left(K_{*}m\right)^{-\frac{1}{N}}K^{*}L\|u_{1}-u_{2}\|_{X}.
	\end{align}
	Since $h \in (0,h^{\star}]$ is chosen sufficiently small, the inequality
	\begin{align}
		\nonumber  \frac{1}{2N}h^{2}\left(K_{*}m\right)^{-\frac{1}{N}}K^{*}L<1
	\end{align}
	holds, which implies that $\mathcal{G}$ is a contractive operator on $B_{h}(u_{0})$. By fixing such a small $h>0$, the Banach contraction principle asserts that $\mathcal{G}$ has a unique fixed point in $B_{h}(u_{0})$.

	We now need to prove that $u \in C^{2}[0,h]$. The regularity $u \in C^{2}(0,h]$ is immediate. Moreover, we have
	\begin{align}\label{FirstDerivative01}
		u'(r)=\frac{r^{2-N}}{N-1}\left(\int_{0}^{r}N\tau^{N-1}K(\tau)f(u(\tau))d\tau\right)^{\frac{N-1}{N}}
	\end{align}
	and
	\begin{align}\label{SecondDerivative02}
		\nonumber  u''(r) & = \frac{2-N}{N-1}r^{1-N}\left(\int_{0}^{r}N\tau^{N-1}K(\tau)f(u(\tau))d\tau\right)^{\frac{N-1}{N}}\\
		& ~~~ +\, rK(r)f(u(r))\left(\int_{0}^{r}N\tau^{N-1}K(\tau)f(u(\tau))d\tau\right)^{-\frac{1}{N}}
	\end{align}
	for all $r \in (0,h]$. Since $K>0$ on $[0,1)$ and $f>0$ on $(0,\infty)$, we have $u'(r), u''(r)>0$ for all $r \in (0,h]$. We proceed to establish the continuity of $u'(r)$ and $u''(r)$ at $r=0$. When $N=2$, by \eqref{IE01} and \eqref{FirstDerivative01}, we deduce	
	\begin{align}
		\nonumber  u'(0) &= \lim_{r \to 0}\frac{u(r)-u(0)}{r-0} \\
		&= \lim_{r \to 0}\frac{1}{r}\int_{0}^{r}\left(\int_{0}^{t}2\tau K(\tau)f(u(\tau))d\tau\right)^{\frac{1}{2}}dt \nonumber \\
		&= \lim_{r \to 0}\left(\int_{0}^{r}2\tau K(\tau)f(u(\tau))d\tau\right)^{\frac{1}{2}} \nonumber \\
		&= 0 \nonumber
	\end{align}
	and
	\begin{align}
		\nonumber  \lim_{r \to 0}u'(r) = \lim_{r \to 0}\left(\int_{0}^{r}2\tau K(\tau)f(u(\tau))d\tau\right)^{\frac{1}{2}} = 0.
	\end{align}
	When $N \geq 3$, it follows once again from \eqref{IE01} and \eqref{FirstDerivative01} that
	\begin{align}
		\nonumber  u'(0) & = \lim_{r \to 0}\frac{u(r)-u(0)}{r-0} \\
		& = \lim_{r \to 0}\frac{\int_{0}^{r}\frac{t^{2-N}}{N-1}\left(\int_{0}^{t}N\tau^{N-1}K(\tau)f(u(\tau))d\tau\right)^{\frac{N-1}{N}}dt}{r} \nonumber \\
		& = \frac{1}{N-1}\lim_{r \to 0}\frac{\left(\int_{0}^{r}N\tau^{N-1}K(\tau)f(u(\tau))d\tau\right)^{\frac{N-1}{N}}}{r^{N-2}} \nonumber \\
		& = \frac{1}{N-1}\left(\lim_{r \to 0}\frac{\int_{0}^{r}N\tau^{N-1}K(\tau)f(u(\tau))d\tau}{r^{\frac{N(N-2)}{N-1}}}\right)^{\frac{N-1}{N}} \nonumber \\
		& = \frac{1}{N-1}\left(\frac{N-1}{N-2}\lim_{r \to 0}r^{\frac{N}{N-1}}K(r)f(u(r))\right)^{\frac{N-1}{N}} \nonumber \\
		& = 0 \nonumber
	\end{align}
	and
	\begin{align}
		\nonumber  \lim_{r \to 0}u'(r) = \lim_{r \to 0}\frac{r^{2-N}}{N-1}\left(\int_{0}^{r}N\tau^{N-1}K(\tau)f(u(\tau))d\tau\right)^{\frac{N-1}{N}}=0.
	\end{align}
	In consequence, we have $\lim_{r \to 0}u'(r)=u'(0)=0$, and thus, $u \in C^{1}[0,h]$. Applying \eqref{FirstDerivative01} again, we obtain
	\begin{align}
		\nonumber  u''(0) & = \lim_{r \to 0}\frac{u'(r)-u'(0)}{r-0} \\
		& = \lim_{r \to 0}\frac{\frac{r^{2-N}}{N-1}\left(\int_{0}^{r}N\tau^{N-1}K(\tau)f(u(\tau))d\tau\right)^{\frac{N-1}{N}}}{r} \nonumber \\
		& = \frac{1}{N-1}\left(\lim_{r \to 0}\frac{\int_{0}^{r}N\tau^{N-1}K(\tau)f(u(\tau))d\tau}{r^{N}}\right)^{\frac{N-1}{N}} \nonumber \\
		& = \frac{1}{N-1}\left(\lim_{r \to 0}K(r)f(u(r))\right)^{\frac{N-1}{N}} \nonumber \\
		& = \frac{1}{N-1}\left(K(0)f(u(0))\right)^{\frac{N-1}{N}}. \nonumber
	\end{align}
	Clearly, $u''(0)>0$. A direct calculation using \eqref{SecondDerivative02} yields
	\begin{align}
		\nonumber  \lim_{r \to 0}u''(r) & = \frac{2-N}{N-1}\left(\lim_{r \to 0}\frac{\int_{0}^{r}N\tau^{N-1}K(\tau)f(u(\tau))d\tau}{r^{N}}\right)^{\frac{N-1}{N}} \nonumber \\
		& ~~~ +\, \lim_{r \to 0}\left[K(r)f(u(r))\right]\left(\lim_{r \to 0}\frac{\int_{0}^{r}N\tau^{N-1}K(\tau)f(u(\tau))d\tau}{r^{N}}\right)^{-\frac{1}{N}} \nonumber\\
		& = \frac{2-N}{N-1}\left(K(0)f(u(0))\right)^{\frac{N-1}{N}}+\left(K(0)f(u(0))\right)^{\frac{N-1}{N}} \nonumber\\
		& = \frac{1}{N-1}\left(K(0)f(u(0))\right)^{\frac{N-1}{N}}. \nonumber
	\end{align}
	Consequently, we have $\lim_{r \to 0}u''(r)=u''(0)$, and thus, $u \in C^{2}[0,h]$. Hence, $u$ is the unique solution to the IVP \eqref{IVPEquation} on the interval $[0,h]$.

	In order to extend the solution $u(r)$ to $r>h$, we let $u'=v$ and define $W=\begin{pmatrix}
		v\\
		u
	\end{pmatrix}$. It follows from \eqref{IE01} and \eqref{SecondDerivative02} that
	\begin{align}
		\nonumber  v'(r)=u''(r)=\frac{2-N}{r}v(r)+rK(r)f(u(r))\left(\frac{(N-1)v(r)}{r^{2-N}}\right)^{-\frac{1}{N-1}}.
	\end{align}
	Hence, we obtain the system of first-order differential equations
	\begin{equation}\label{SystemofEqus01}
		\left \{
		\begin{aligned}
			& W'(r)=\begin{pmatrix}
				K(r)f(u(r))\left(\frac{r}{(N-1)v(r)}\right)^{\frac{1}{N-1}}-\frac{N-2}{r}v(r)\\
				v(r)
			\end{pmatrix} \eqqcolon \mathscr{F}(r,W(r)),\\
			& W(h)=\begin{pmatrix}
				u'(h)\\
				u(h)
			\end{pmatrix}.
		\end{aligned}
		\right.
	\end{equation}
	Under assumptions $(\mathbf{K})$ and $(\mathbf{f_{1}})$, $\mathscr{F}$ is continuous in $r$ on $[0,1)$, and $\mathscr{F}(r,W)$ is locally Lipschitz continuous in $W$ for $W > 0$. We therefore infer that system \eqref{SystemofEqus01} admits a unique solution defined on a small neighborhood of $h$. Notice that the component $u$ of $W$ satisfies
	\begin{align}
		\nonumber  \left(\frac{N-1}{r}u'(r)\right)^{\frac{1}{N-1}}\left[u''(r)+\frac{N-2}{r}u'(r)\right]=K(r)f(u(r))>0,
	\end{align}
	with $u(h)>0$ and $u'(h)>0$. Thus, we have $u'(r)>u'(h)$ and $u''(r)>0$ for $r>h$. Therefore, the solution $W$ to system \eqref{SystemofEqus01} can be extended to $r>h$ until either $r$ reaches $1$ or $u(r)$ blows up to infinity. Hence, the IVP \eqref{IVPEquation} admits a unique solution $u$ on some maximal interval of existence $[0,T)$ with $T \leq 1$, and $u(r) \to \infty$ as $r \to T$ if $T<1$. This completes the proof.
\end{proof}

The next result is a comparison principle that will play an important role in the sequel.
\begin{theorem}[Comparison principle]\label{EPEBRSTheorem01}
	Suppose that conditions $(\mathbf{K})$ and $(\mathbf{f_{1}})$ hold. Let $u_{1}, u_{2} \in C^{2}[0,T)$ with $u_{1}'(0)=u_{2}'(0)=0$, and assume that for all $r \in (0,T)$ the following inequalities are satisfied:
	\begin{align}\label{Inequality001}
		\left(\frac{N-1}{r}u_{1}'(r)\right)^{\frac{1}{N-1}}\left[u_{1}''(r)+\frac{N-2}{r}u_{1}'(r)\right] \leq K(r)f(u_{1}(r)) 
	\end{align}
	and
	\begin{align}\label{Inequality002}
		\left(\frac{N-1}{r}u_{2}'(r)\right)^{\frac{1}{N-1}}\left[u_{2}''(r)+\frac{N-2}{r}u_{2}'(r)\right] \geq K(r)f(u_{2}(r)).
	\end{align}
	Then $u_{1}(0)<u_{2}(0)$ implies $u_{1}(r)<u_{2}(r)$ for all $r \in [0,T)$. 
\end{theorem}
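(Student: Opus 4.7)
The plan is to rewrite both differential inequalities in a common divergence form and then compare via two successive integrations. Introducing
\[
P_i(r) := \bigl((N-1)\,r^{N-2}\,u_i'(r)\bigr)^{N/(N-1)}, \qquad i=1,2,
\]
a direct chain-rule computation (applying $\tfrac{d}{dr}A^{N/(N-1)}=\tfrac{N}{N-1}A^{1/(N-1)}A'$ with $A=(N-1)r^{N-2}u_i'$) gives the identity
\[
\frac{1}{N}\,P_i'(r) = r^{N-1}\left(\frac{N-1}{r}u_i'(r)\right)^{1/(N-1)}\left[u_i''(r)+\frac{N-2}{r}u_i'(r)\right].
\]
Multiplying \eqref{Inequality001} and \eqref{Inequality002} through by $r^{N-1}$ therefore turns them into the pointwise bounds
\[
P_1'(r) \le N r^{N-1} K(r) f(u_1(r)), \qquad P_2'(r) \ge N r^{N-1} K(r) f(u_2(r)).
\]
Since $u_i\in C^2[0,T)$ with $u_i'(0)=0$, a Taylor expansion gives $u_i'(r)=O(r)$ near $0$, whence $P_i(r)=O(r^N)$ and $P_i'(r)=O(r^{N-1})$; in particular $P_i(0)=0$ and $P_i'$ is integrable up to the origin.

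Next I would argue by contradiction. If the conclusion failed, then by continuity of $u_1-u_2$ together with $u_1(0)<u_2(0)$ there would exist a smallest $r_\ast\in(0,T)$ with $u_1(r_\ast)=u_2(r_\ast)$, and $u_1(r)<u_2(r)$ for all $r\in[0,r_\ast)$. On that interval the monotonicity of $f$ yields $f(u_1(r))\le f(u_2(r))$, so combining the two bounds above gives $P_1'(r)\le P_2'(r)$. Integrating from $0$ to $r\in[0,r_\ast]$ and using $P_1(0)=P_2(0)=0$ produces $P_1(r)\le P_2(r)$; taking $(N-1)/N$-th roots (both sides nonnegative, since $u_i'\ge 0$ is implicit in the $(N-1)$-convex setting underlying the formulation) delivers $u_1'(r)\le u_2'(r)$ on $(0,r_\ast]$.

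A second integration from $0$ to $r_\ast$ then gives $u_1(r_\ast)-u_1(0)\le u_2(r_\ast)-u_2(0)$, equivalently
\[
u_2(0)-u_1(0) \le u_2(r_\ast)-u_1(r_\ast) = 0,
\]
in direct conflict with the hypothesis $u_1(0)<u_2(0)$, which completes the argument. The one nontrivial step I foresee is the divergence identity for $P_i$ and the verification that $P_i$ is absolutely continuous at the origin with $P_i(0)=0$; once these are in place the rest is a transparent sub/super-solution chain (compare pointwise using monotonicity of $f$, integrate once to compare $u_i'$, and integrate a second time to compare $u_i$). The identity itself mirrors the integral reformulation \eqref{IE01}--\eqref{FirstDerivative01} already used in the proof of Theorem \ref{EEPRSTheorem01}, so nothing genuinely new must be invoked.
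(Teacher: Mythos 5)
Your argument is correct and follows essentially the same route as the paper: both proofs pass to the first contact point, use the monotonicity of $f$ on the interval before it together with $u_{1}(0)<u_{2}(0)$, and compare after two integrations of the divergence-form identity — the paper simply states the resulting integral inequalities (in the form \eqref{IE01}) where you derive them explicitly via $P_{i}$. The sign caveat you flag ($u_{i}'\ge 0$, needed to take $(N-1)/N$-th roots) is equally implicit in the paper's integral inequalities, so it is not a gap relative to the published proof.
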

\begin{proof}
	The proof proceeds by contradiction. Assume that $u_{1}<u_{2}$ on $[0,T)$ does not hold. Since $u_{1}, u_{2} \in C^{2}[0,T)$ and $u_{1}(0)<u_{2}(0)$, there exists a point $\widehat{T} \in (0,T)$ such that $u_{1}(r)<u_{2}(r)$ for all $r \in [0,\widehat{T})$ and $u_{1}(\widehat{T})=u_{2}(\widehat{T})$. On the other hand, in view of \eqref{Inequality001} and \eqref{Inequality002}, we have
	\begin{align}
		\nonumber  u_{1}(r) \leq u_{1}(0)+\int_{0}^{r}\frac{t^{2-N}}{N-1}\left(\int_{0}^{t}N\tau^{N-1}K(\tau)f(u_{1}(\tau))d\tau\right)^{\frac{N-1}{N}}dt
	\end{align}
	and
	\begin{align}
		\nonumber  u_{2}(r) \geq u_{2}(0)+\int_{0}^{r}\frac{t^{2-N}}{N-1}\left(\int_{0}^{t}N\tau^{N-1}K(\tau)f(u_{2}(\tau))d\tau\right)^{\frac{N-1}{N}}dt,
	\end{align}
	respectively. From the monotonicity of $f$ and the condition $u_{1}(0)<u_{2}(0)$, it follows that
	\begin{align}
		\nonumber  u_{1}(\widehat{T}) & \leq u_{1}(0)+\int_{0}^{\widehat{T}}\frac{t^{2-N}}{N-1}\left(\int_{0}^{t}N\tau^{N-1}K(\tau)f(u_{1}(\tau))d\tau\right)^{\frac{N-1}{N}}dt\\
		& < u_{2}(0)+\int_{0}^{\widehat{T}}\frac{t^{2-N}}{N-1}\left(\int_{0}^{t}N\tau^{N-1}K(\tau)f(u_{2}(\tau))d\tau\right)^{\frac{N-1}{N}}dt \nonumber\\
		& \leq u_{2}(\widehat{T}). \nonumber
	\end{align}
	However, this contradicts the fact that $u_{1}(\widehat{T})=u_{2}(\widehat{T})$. The proof is complete.
\end{proof}

\section{Multiplicity of Radial $(N-1)$-convex Solutions}
\label{SectionMultiplicityofRadialN1convexSolutions}

In this section, we establish the existence of infinitely many radial $(N-1)$-convex solutions to the boundary blow-up problem \eqref{Eqne1}. The proof relies on the method of sub- and super-solutions, in which the Keller-Osserman type condition $(\mathbf{f_{2}})$ is a central ingredient. We begin by introducing the notations and their basic facts required for the subsequent discussion.

As a consequence of \eqref{SecEqu01}, there exists $a>0$ such that
\begin{align}\label{MRExEquation01}
	G(t) \coloneqq \int_{a}^{t}\left(\frac{2N-1}{N-1}F(\tau)\right)^{-\frac{N-1}{2N-1}}d\tau \to \infty \text{ as } t \to \infty.
\end{align}
Denote by $g$ the inverse function of $G$; that is, $g$ satisfies
\begin{align}\label{MRExEquation02}
	\int_{a}^{g(t)}\left(\frac{2N-1}{N-1}F(\tau)\right)^{-\frac{N-1}{2N-1}}d\tau=t \quad \text{ for all } t>0.
\end{align}
It is easy to see that
\begin{align}
	\nonumber  g(0)=\lim_{t \to 0^{+}}g(t)=a \quad \text{and} \quad \lim_{t \to \infty}g(t)=\infty.
\end{align}
By differentiating \eqref{MRExEquation02} with respect to $t$, we obtain
\begin{align}
	\nonumber g'(t)=\left(\frac{2N-1}{N-1}F(g(t))\right)^{\frac{N-1}{2N-1}}.
\end{align}
Then
\begin{align}
	\nonumber  g''(t)=\frac{f(g(t))}{\left(\frac{2N-1}{N-1}F(g(t))\right)^{\frac{1}{2N-1}}}.
\end{align}
Thus, there hold
\begin{align}
	\nonumber  \left(g'(t)\right)^{\frac{1}{N-1}}g''(t)=f(g(t))
\end{align}
and
\begin{align}\label{MRExEquation03}
	\frac{g'(t)}{g''(t)}=\frac{\left(\frac{2N-1}{N-1}F(g(t))\right)^{\frac{N}{2N-1}}}{f(g(t))}=-\frac{\left[G'(g(t))\right]^{2}}{G''(g(t))}.
\end{align}
Define
\begin{align}\label{MRExEquation04}
	H(\tau)=-\frac{G(\tau)G''(\tau)}{\left[G'(\tau)\right]^{2}}.
\end{align}
In what follows, let $p \in C^{1}(0,\infty)$ be positive, satisfying $p'(t)<0$ for all $t>0$ and $\lim_{t \to 0^{+}}p(t)=\infty$. Then we define $P$ by
\begin{align}
	\nonumber  P(\tau)=\int_{\tau}^{1}p(s)ds.
\end{align}
We say that the function $p$ is of class $\mathcal{P}_{\infty}$ if
\begin{align}\label{MRExEquation05}
	\int_{0^{+}}\left(P(\tau)\right)^{\frac{N-1}{N}}d\tau=\infty
\end{align}
(\footnote[2]{$\int_{0^{+}}h(\tau)d\tau \coloneqq \lim_{\delta \to 0^{+}}\int_{\delta}^{b}h(\tau)d\tau$, for some fixed $b>0$.}).

We are now in a position to prove the existence of infinitely many boundary blow-up solutions to \eqref{Eqne1}. We construct sub- and super-solutions on the given interval, and the comparison principle ensures control of the solutions between them.
\begin{theorem}\label{EPEBURSTheorem}
	Suppose that conditions $(\mathbf{K})$, $(\mathbf{f_{1}})$, and $(\mathbf{f_{2}})$ hold. Assume further that there exist a function $p \in \mathcal{P}_{\infty}$ and two constants $c,d>0$ such that
	\begin{align}
		\nonumber  cp(1-r) \leq K(r) \leq dp(1-r) \quad \text{ for all } r<1 \text{ close to } 1.
	\end{align}
	If $\lim_{\tau \to \infty}H(\tau)$ exists (denoted by $H_{\infty}$), then \eqref{Eqne1} has infinitely many radial $(N-1)$-convex solutions.
\end{theorem}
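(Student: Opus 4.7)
The plan is to exploit Lemma \ref{EVLemma002}, which reduces the existence of radial $(N-1)$-convex blow-up solutions of \eqref{Eqne1} to that of solutions of the ODE \eqref{BBPEquation}, and to attack the latter through the IVP \eqref{IVPEquation} parametrised by $u_{0}=u(0)>0$. Theorem \ref{EEPRSTheorem01} already gives, for each $u_{0}>0$, a unique maximal solution $u(\cdot;u_{0})\in C^{2}[0,T(u_{0}))$ of \eqref{IVPEquation} with $T(u_{0})\leq 1$; to promote this to a solution of \eqref{BBPEquation} I must force $T(u_{0})=1$ together with $u(r;u_{0})\to\infty$ as $r\to 1$. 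I would do this by sandwiching $u(\cdot;u_{0})$ between a sub-solution $\underline{u}$ and a super-solution $\overline{u}$ of the ODE, both in $C^{2}[0,1)$ with vanishing derivative at $0$ and blowing up at $r=1$, and then invoking the comparison principle of Theorem \ref{EPEBRSTheorem01}.

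The crux is the construction of the super-solution. I would try an ansatz of the form $\overline{u}(r)=g(\alpha+\beta\Phi(r))$, where $\alpha>0$ is a free parameter (used later to control $\overline{u}(0)=g(\alpha)$), $\beta>0$ is a fixed large constant depending on $d$ and $N$, and $\Phi\in C^{2}[0,1)$ is a non-negative function with $\Phi(0)=\Phi'(0)=0$ and $\Phi(r)\to\infty$ as $r\to 1$, built from $P$ so as to balance the weight. Expanding the differential operator with $\overline{u}'=\beta g'\Phi'$ and $\overline{u}''=\beta^{2}g''(\Phi')^{2}+\beta g'\Phi''$ and using the identity $(g'(t))^{1/(N-1)}g''(t)=f(g(t))$, the left-hand side of \eqref{Inequality002} for $\overline{u}$ splits into a principal term proportional to $f(\overline{u})\,r^{-1/(N-1)}(\Phi'(r))^{(2N-1)/(N-1)}$ and a remainder involving $g'/g''$ together with $\Phi''$ and $\tfrac{N-2}{r}\Phi'$. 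The principal term is matched to the upper bound $K(r)\leq dp(1-r)$ by taking $\Phi'$ essentially as a power of $P(1-r)$; the divergence \eqref{MRExEquation05} of $\int_{0^{+}}P(\tau)^{(N-1)/N}d\tau$ is precisely what guarantees $\Phi(r)\to\infty$ at $r=1$. The remainder is handled using \eqref{MRExEquation03}, which equates $g'/g''$ with $-[G']^{2}/G''$, so that $H(g(\cdot))$ controls the ratio; the existence of $H_{\infty}$ pins down the size of $g'/g''$ uniformly for large arguments and is exactly what makes the remainder absorbable, while near $r=0$ the flatness $\Phi'(0)=0$ neutralises the $\tfrac{N-2}{r}$ singularity.

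A completely symmetric construction would yield a fixed sub-solution $\underline{u}(r)=g(\gamma+\delta\Psi(r))$ satisfying \eqref{Inequality001} on $[0,1)$ with $\underline{u}(r)\to\infty$ as $r\to 1$, this time using the lower bound $K(r)\geq cp(1-r)$ and tuning the constants in the opposite direction. For any $u_{0}>\underline{u}(0)$ I would then choose $\alpha=\alpha(u_{0})$ so that $\overline{u}(0)=g(\alpha)>u_{0}$ and apply Theorem \ref{EPEBRSTheorem01} twice (once to $(\underline{u},u(\cdot;u_{0}))$, once to $(u(\cdot;u_{0}),\overline{u})$) to conclude $\underline{u}(r)<u(r;u_{0})<\overline{u}(r)$ on $[0,T(u_{0}))$. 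Finiteness of $\overline{u}$ on $[0,1)$ rules out premature blow-up and gives $T(u_{0})=1$ via Theorem \ref{EEPRSTheorem01}, while the lower bound forces $u(r;u_{0})\to\infty$ as $r\to 1$. Hence Lemma \ref{EVLemma002} produces one radial $(N-1)$-convex blow-up solution of \eqref{Eqne1} for each $u_{0}\in(\underline{u}(0),\infty)$, yielding a continuum (and in particular infinitely many) distinct solutions.

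The step I expect to be the main obstacle is the verification of the differential inequality for $\overline{u}$ uniformly on the whole interval $[0,1)$: the choice of $\Phi$ has to simultaneously match the boundary singularity encoded by $p$ and the asymptotic growth of $f,F,g$ encoded by $H_{\infty}$, and this matching has to remain consistent near $r=0$ where the factor $\tfrac{N-2}{r}$ becomes singular. Keeping a single $\alpha$-independent $\beta$ (and correspondingly $\delta$ for $\underline{u}$) that works on the entire interval, while preserving the $C^{2}$-regularity and the condition $\overline{u}'(0)=0$, is the delicate bookkeeping that turns the formal asymptotic matching into a rigorous global inequality.
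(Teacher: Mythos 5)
Your endgame (radial reduction via Lemma \ref{EVLemma002}, the IVP of Theorem \ref{EEPRSTheorem01}, a two-sided application of the comparison principle of Theorem \ref{EPEBRSTheorem01}, and varying the initial value) is exactly the paper's, but the decisive step --- actually exhibiting functions satisfying \eqref{Inequality001} and \eqref{Inequality002} on all of $[0,1)$ under the stated hypotheses --- is the part you defer as ``delicate bookkeeping'', and the ansatz you sketch does not deliver it. With $\overline{u}=g(\alpha+\beta\Phi)$ the principal term of the operator is a constant multiple of $\beta^{\frac{2N-1}{N-1}}r^{-\frac{1}{N-1}}f(\overline{u})\,(\Phi')^{\frac{2N-1}{N-1}}$, so matching it against $p(1-r)f(\cdot)$ forces $\Phi'\approx p(1-r)^{\frac{N-1}{2N-1}}$; but then blow-up of $\Phi$ at $r=1$ (which you need for the \emph{sub}-solution, to force $u_{k}\to\infty$) requires $\int_{0^{+}}p(\tau)^{\frac{N-1}{2N-1}}d\tau=\infty$, which is not the hypothesis \eqref{MRExEquation05}. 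If instead you take $\Phi'\approx P(1-r)^{\frac{N-1}{N}}$ so that \eqref{MRExEquation05} gives $\Phi\to\infty$, the principal term scales like $P(1-r)^{\frac{2N-1}{N}}$, which is not comparable to $p(1-r)$ (for $p(s)=s^{-\gamma}$ with $\gamma>\frac{2N-1}{N-1}$ one has $P(1-r)^{\frac{2N-1}{N}}/p(1-r)\to\infty$ as $r\to1$), so the sub-solution inequality fails near the boundary. Moreover, the remark that $H_{\infty}$ ``pins down the size of $g'/g''$'' is not correct: by \eqref{MRExEquation03}--\eqref{MRExEquation04}, $g'(t)/g''(t)=t/H(g(t))\to\infty$, so boundedness of $H$ alone does not make the lower-order terms of your additive ansatz uniformly absorbable, nor does it justify a single $\beta$ working for arbitrarily large shifts $\alpha$.

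The paper resolves precisely this point with a more structured ansatz: $w(r)=g\bigl(k\varphi^{\frac{N}{2N-1}}(y(r))\bigr)$ with $\varphi$ as in \eqref{MREquation01} and $y(r)=\frac{1-r^{2}}{2}$. The exponent $\frac{N}{2N-1}$ is chosen so that, through the identity $k\varphi^{\frac{N}{2N-1}}(y)=G(w)$ in \eqref{MREquation04}, the unbounded ratio $g'/g''$ recombines into the \emph{bounded} quantity $1/H(w)$ together with $1/S(y)$ and $P(y)/p(y)$; the elementary estimate $[\varphi']^{2}/(\varphi\varphi'')\le\frac{2N-1}{N-1}$, the finiteness of $H_{\infty}$, and $P(y)/p(y)\to0$ as $r\to1$ then give the uniform two-sided bound \eqref{MREquation0010}, while $y'(r)=-r$ cancels both the $r^{-\frac{1}{N-1}}$ factor and the $\frac{N-2}{r}$ singularity at the origin. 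Finally, the multiplicative parameter $k$ produces the factor $k^{\frac{2N-1}{N-1}}$, so a small $k_{1}$ yields the sub-solution and a large $k_{2}$ the super-solution (after harmless rescalings of $p$ exploiting $\frac{1-r}{2}\le y(r)\le 1-r$), and the family of solutions is simply $u_{0}=k\in(w_{1}(0),w_{2}(0))$ --- avoiding your stronger, unproven claim of a solution for every $u_{0}>\underline{u}(0)$ with one fixed $\beta$. In short, the framework of your proposal is right, but the construction at its core is missing, and as sketched it is inconsistent with the hypotheses actually assumed.
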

\begin{proof}
	The condition on $p$ ensures that \eqref{MRExEquation05} holds. Define
	\begin{align}\label{MREquation01}
		\varphi(t) \coloneqq \int_{t}^{1}\left(\frac{N}{N-1}P(\tau)\right)^{\frac{N-1}{N}}d\tau \quad \text{ for } t>0.
	\end{align}
	Note from \eqref{MRExEquation05} that
	\begin{align}
		\nonumber  \varphi(0)=\lim_{t \to 0^{+}}\varphi(t)=\infty.
	\end{align}
	Moreover, we have
	\begin{align}\label{MREquation02}
		\varphi'(t)=-\left(\frac{N}{N-1}P(t)\right)^{\frac{N-1}{N}} \quad \text{and} \quad \varphi''(t)=p(t)\left(\frac{N}{N-1}P(t)\right)^{-\frac{1}{N}}.
	\end{align}
	It is easy to see that the function $y(r)=\frac{1-r^{2}}{2}$ satisfies
	\begin{equation}
		\nonumber  \left \{
		\begin{aligned}
			& (-1)^{N}y'(r)\left[y''(r)\right]^{N-1}=r, \quad r \in (0,1),\\
			& y'(0)=0, \quad y(1)=0.
		\end{aligned}
		\right.
	\end{equation}
	For $r \in [0,1)$, define
	\begin{align}
		\nonumber  w(r) \coloneqq g(k\varphi^{\frac{N}{2N-1}}(y(r))) \quad \text{ for some constant } k>0.
	\end{align}
	An elementary calculation, using the chain rule, yields that
	\begin{align}
		\nonumber  w'=\frac{kN}{2N-1}g'\varphi^{-\frac{N-1}{2N-1}}\varphi' y'
	\end{align}
	and
	\begin{align}
		\nonumber  w'' & = \frac{kN}{2N-1}\varphi^{-\frac{N-1}{2N-1}}\left[\frac{kN}{2N-1}g''\varphi^{-\frac{N-1}{2N-1}}(\varphi')^{2}(y')^{2}\right.\\
		& ~~~ -\left.\frac{N-1}{2N-1}g'\frac{(\varphi')^{2}}{\varphi}(y')^{2}+g'\varphi''(y')^{2}+g'\varphi' y''\right]. \nonumber
	\end{align}
	Consequently, we have
	\begin{align}\label{MREquation03}
		\nonumber  (w')^{\frac{1}{N-1}}w'' & = k^{\frac{2N-1}{N-1}}\left(\frac{N}{2N-1}\right)^{\frac{N}{N-1}}(g')^{\frac{1}{N-1}}g''(-\varphi')^{\frac{1}{N-1}}\varphi''(-1)^{\frac{N}{N-1}}(y')^{\frac{1}{N-1}}y''\\
		& ~~~ \times\left[\frac{N}{2N-1}\frac{(\varphi')^{2}}{\varphi\varphi''}\left(-\frac{(y')^{2}}{y''}\right)-\frac{N-1}{2N-1}\frac{g'}{k\varphi^{\frac{N-1}{2N-1}}g''}\frac{(\varphi')^{2}}{\varphi\varphi''}\left(-\frac{(y')^{2}}{y''}\right)\right. \nonumber\\
		& ~~~ +\left.\frac{g'}{k\varphi^{\frac{N-1}{2N-1}}g''}\left(-\frac{(y')^{2}}{y''}\right)+\frac{g'}{k\varphi^{\frac{N-1}{2N-1}}g''}\left(-\frac{\varphi'}{\varphi''}\right)\right].
	\end{align}
	The definition of $w$ gives
	\begin{align}\label{MREquation04}
		k\varphi^{\frac{N-1}{2N-1}}(y(r))=g^{-1}(w(r))=G(w(r)).
	\end{align}
	From \eqref{MRExEquation03}, \eqref{MRExEquation04}, and \eqref{MREquation04}, we obtain that
	\begin{align}\label{MREquation05}
		\frac{1}{H(w)}=\frac{g'}{k\varphi^{\frac{N-1}{2N-1}}g''}.
	\end{align}
	It follows from \eqref{MREquation02} that
	\begin{align}\label{MREquation06}
		(-\varphi'(t))^{\frac{1}{N-1}}\varphi''(t)=p(t) \quad \text{and} \quad \frac{\varphi'(t)}{\varphi''(t)}=-\frac{N}{N-1}\frac{P(t)}{p(t)}.
	\end{align}
	We also have
	\begin{align}\label{MREquation07}
		y'(r)=-r \quad \text{and} \quad y''(r)=-1.
	\end{align}
	Now using \eqref{MREquation05}--\eqref{MREquation07}, equation \eqref{MREquation03} reduces to
	\begin{align}
		\nonumber  (w')^{\frac{1}{N-1}}w''=k^{\frac{2N-1}{N-1}}\left(\frac{N}{2N-1}\right)^{\frac{N}{N-1}}r^{\frac{1}{N-1}}p(y)f(w)\Delta(r),
	\end{align}
	where
	\begin{align}
		\nonumber  \Delta(r) & \coloneqq \frac{N}{2N-1}\frac{1}{S(y)}r^{2}-\frac{N-1}{2N-1}\frac{1}{H(w)}\frac{1}{S(y)}r^{2}+\frac{1}{H(w)}r^{2}\\
		& ~~~ + \, \frac{N}{N-1}\frac{1}{H(w)}\frac{P(y)}{p(y)}, \nonumber
	\end{align}
	with
	\begin{align}\label{MREquation08}
		S(\tau)=\frac{\varphi(\tau)\varphi''(\tau)}{\left[\varphi'(\tau)\right]^{2}}.
	\end{align}
	Clearly, there holds
	\begin{align}
		\nonumber  (w')^{\frac{N}{N-1}}=k^{\frac{2N-1}{N-1}}\left(\frac{N}{2N-1}\right)^{\frac{N}{N-1}}\frac{N}{N-1}r^{\frac{N}{N-1}}\frac{1}{H(w)}P(y)f(w).
	\end{align}
	Therefore, we deduce
	\begin{align}\label{MREquation09}
		\nonumber  \left(\frac{N-1}{r}w'\right)^{\frac{1}{N-1}}\left[w''+\frac{N-2}{r}w'\right] & = (N-1)^{\frac{1}{N-1}}k^{\frac{2N-1}{N-1}}\left(\frac{N}{2N-1}\right)^{\frac{N}{N-1}}p(y)f(w)\\
		& ~~~ \times \left[\Delta(r)+\frac{N(N-2)}{N-1}\frac{1}{H(w)}\frac{P(y)}{p(y)}\right].
	\end{align}
	From \eqref{MREquation01} and \eqref{MREquation02}, we derive
	\begin{align}
		\nonumber  \frac{\left[\varphi'(t)\right]^{2}}{\varphi(t)\varphi''(t)} & = \frac{\left(\frac{N}{N-1}P(t)\right)^{\frac{2N-1}{N}}}{p(t)\int_{t}^{1}\left(\frac{N}{N-1}P(\tau)\right)^{\frac{N-1}{N}}d\tau}\\
		& = \frac{\int_{t}^{1}\left[\left(\frac{N}{N-1}P(s)\right)^{\frac{2N-1}{N}}\right]'ds}{\int_{t}^{1}\left[p(s)\int_{s}^{1}\left(\frac{N}{N-1}P(\tau)\right)^{\frac{N-1}{N}}d\tau\right]'ds} \nonumber\\
		& = \frac{\frac{2N-1}{N-1}\int_{t}^{1}p(s)\left(\frac{N}{N-1}P(s)\right)^{\frac{N-1}{N}}ds}{\int_{t}^{1}\left[p(s)\left(\frac{N}{N-1}P(s)\right)^{\frac{N-1}{N}}-p'(s)\int_{s}^{1}\left(\frac{N}{N-1}P(\tau)\right)^{\frac{N-1}{N}}d\tau\right]ds} \nonumber\\
		& \leq \frac{2N-1}{N-1}, \nonumber
	\end{align}
	which, in turn, implies
	\begin{align}
		\nonumber  \frac{1}{H(w)}-\frac{N-1}{2N-1}\frac{1}{H(w)}\frac{1}{S(y)} \geq 0.
	\end{align}
	Therefore, we infer that
	\begin{align}
		\nonumber  \Theta(r) \coloneqq \frac{N}{2N-1}\frac{1}{S(y)}r^{2}-\frac{N-1}{2N-1}\frac{1}{H(w)}\frac{1}{S(y)}r^{2}+\frac{1}{H(w)}r^{2} \geq 0,
	\end{align}
	and
	\begin{align}
		\nonumber  \Theta(r)>0 \quad \text{ for all } r \in (0,1].
	\end{align}
	Since
	\begin{align}
		\nonumber  \lim_{t \to 0}\frac{P(t)}{p(t)}=0, \text{ which implies } \lim_{r \to 1}\frac{P(y(r))}{p(y(r))}=0, \text{ and } H_{\infty} \neq \infty,
	\end{align}
	it follows that $\Delta(r)$ is positive and continuous for all $r \in [0,1)$ and admits a finite positive limit as $r \to 1^{-}$. Then there exist constants $C_{1}$ and $C_{2}$ with $0<C_{1}<C_{2}$ such that
	\begin{align}\label{MREquation0010}
		C_{1} \leq \Delta(r)+\frac{N(N-2)}{N-1}\frac{1}{H(w)}\frac{P(y)}{p(y)} \leq C_{2} \quad \text{ for all } r \in [0,1).
	\end{align}
	Combining \eqref{MREquation09} with inequality \eqref{MREquation0010}, we deduce that
	\begin{align}\label{MREquation0011}
		\left(\frac{N-1}{r}w'\right)^{\frac{1}{N-1}}\left[w''+\frac{N-2}{r}w'\right] \leq (N-1)^{\frac{1}{N-1}}k^{\frac{2N-1}{N-1}}\left(\frac{N}{2N-1}\right)^{\frac{N}{N-1}}C_{2}p(y)f(w) 
	\end{align}
	and
	\begin{align}\label{MREquation0012}
		\left(\frac{N-1}{r}w'\right)^{\frac{1}{N-1}}\left[w''+\frac{N-2}{r}w'\right] \geq (N-1)^{\frac{1}{N-1}}k^{\frac{2N-1}{N-1}}\left(\frac{N}{2N-1}\right)^{\frac{N}{N-1}}C_{1}p(y)f(w)
	\end{align}
	for all $r \in [0,1)$. By replacing $p(t)$ with $\epsilon p(2t)$ for some sufficiently small $\epsilon>0$, we may assume that
	\begin{align}
		\nonumber  K(r) \geq p\left(\frac{1-r}{2}\right) \quad \text{ for all } r \in [0,1).
	\end{align}
	Since $y(r) \geq \frac{1-r}{2}$, it follows that
	\begin{align}\label{MREquation0013}
		p(y(r)) \leq p\left(\frac{1-r}{2}\right) \leq K(r) \quad \text{ for all } r \in [0,1).
	\end{align}
	Inserting \eqref{MREquation0013} into \eqref{MREquation0011}, we obtain
	\begin{align}
		\nonumber  \left(\frac{N-1}{r}w'\right)^{\frac{1}{N-1}}\left[w''+\frac{N-2}{r}w'\right] \leq (N-1)^{\frac{1}{N-1}}k^{\frac{2N-1}{N-1}}\left(\frac{N}{2N-1}\right)^{\frac{N}{N-1}}C_{2}K(r)f(w)
	\end{align}
	for all $r \in [0,1)$. Define
	\begin{align}
		\nonumber  w_{1}(r) \coloneqq g(k_{1}\varphi^{\frac{N}{2N-1}}(y(r))),
	\end{align}
	where $k_{1}>0$ is a constant. We can choose $k_{1}$ sufficiently small so that $w_{1}$ satisfies
	\begin{align}
		\nonumber  \left(\frac{N-1}{r}w_{1}'(r)\right)^{\frac{1}{N-1}}\left[w_{1}''(r)+\frac{N-2}{r}w_{1}'(r)\right] \leq K(r)f(w_{1}(r))
	\end{align}
	for all $r \in [0,1)$.

	We now proceed to construct a function $w_{2}$ that satisfies the reverse inequality. By replacing $p(t)$ with $Mp(t)$ for some sufficiently large $M>0$, we may assume that
	\begin{align}
		\nonumber  p(1-r) \geq K(r) \quad \text{ for all } r \in [0,1).
	\end{align}
	Since $y(r) \leq 1-r$, it follows that
	\begin{align}
		\nonumber  p(y(r)) \geq p(1-r) \geq K(r) \quad \text{ for all } r \in [0,1).
	\end{align}
	Thus, by \eqref{MREquation0012} (with $\varphi(t)$ and $C_{1}$ determined by this new function $p(t)$), we have
	\begin{align}
		\nonumber  \left(\frac{N-1}{r}w'\right)^{\frac{1}{N-1}}\left[w''+\frac{N-2}{r}w'\right] \geq (N-1)^{\frac{1}{N-1}}k^{\frac{2N-1}{N-1}}\left(\frac{N}{2N-1}\right)^{\frac{N}{N-1}}C_{1}K(r)f(w)
	\end{align}
	for all $r \in [0,1)$. Furthermore, by choosing $k=k_{2}$ sufficiently large, we define
	\begin{align}
		\nonumber  w_{2}(r) \coloneqq g(k_{2}\varphi^{\frac{N}{2N-1}}(y(r))) \quad \text{ for all } r \in [0,1),
	\end{align}
	which satisfies
	\begin{align}
		\nonumber  w_{1}(0)<w_{2}(0), \quad \left(\frac{N-1}{r}w_{2}'(r)\right)^{\frac{1}{N-1}}\left[w_{2}''(r)+\frac{N-2}{r}w_{2}'(r)\right] \geq K(r)f(w_{2}(r))
	\end{align}
	for all $r \in [0,1)$. For any $k \in (w_{1}(0),w_{2}(0))$, let $u_{k}$ denote the unique solution to the IVP \eqref{IVPEquation} with $u_{0}=k$. Then in view of Theorem \ref{EPEBRSTheorem01}, we have
	\begin{align}
		\nonumber  w_{1}(r)<u_{k}(r)<w_{2}(r) \quad \text{ for all } r \in [0,1)
	\end{align}
	and $u_{k}(r)$ is well-defined. We therefore conclude, by Theorem \ref{EEPRSTheorem01}, that $u_{k}(r)$ is defined for all $r \in [0,1)$, with $u_{k}'(r)>0$ for all $r \in (0,1)$ and $u_{k}''(r)>0$ for all $r \in [0,1)$. Since $w_{1}(r) \to \infty$ as $r \to 1$, it follows that $u_{k}(r) \to \infty$ as $r \to 1$. This shows that $u_{k}$ is indeed a solution to \eqref{BBPEquation}. Varying $k$ thus yields infinitely many solutions to \eqref{BBPEquation}; equivalently, by Lemma \ref{EVLemma002}, problem \eqref{Eqne1} admits infinitely many radial $(N-1)$-convex solutions. This completes the proof.  
\end{proof}

\section*{Declaration of competing interest}
There is no conflict of interest.

\section*{Acknowledgements}
The first author gratefully acknowledges the financial support provided by the Research and Development Cell, Indian Institute of Technology Guwahati, India, through Project No. MATHSPNIITG01179xxKS001.

\section*{Data availability}
No data was used for the research described in the article.

\bibliographystyle{plain}

\begin{thebibliography}{plain}
	
	\bibitem{Bieberbach1916}
	Bieberbach, L.:
	$\Delta u=e^{u}$ und die automorphen Funktionen. 
	Math. Ann. \textbf{77}(2) 173--212 (1916).
	
	\bibitem{ChenWang2013}
	Chen, Y., Wang, M.:
	Boundary blow-up solutions of $p$-Laplacian elliptic equations with a weakly superlinear nonlinearity.
	Nonlinear Anal. Real World Appl. \textbf{14}(3) 1527--1535 (2013).
	
	
	\bibitem{ChuaquiCortazarElguetaFloresLetelierGarciaMelian2003}
	Chuaqui, M., Cort\'{a}zar, C., Elgueta, M., Flores, C., Letelier, R., Garc\'{i}a-Meli\'{a}n, J.:
	On an elliptic problem with boundary blow-up and a singular weight: the radial case.
	Proc. Roy. Soc. Edinburgh \textbf{133}(6) 1283--1297 (2003).
	
	\bibitem{ChuaquiCortazarElguetaGarciaMelian2004}
	Chuaqui, M., Cort\'{a}zar, C., Elgueta, M., Garc\'{i}a-Meli\'{a}n, J.:
	Uniqueness and boundary behaviour of large solutions to elliptic problems with singular weights.
	Comm. Pure Appl. Anal. \textbf{3}(4) 653--662 (2004).
	
	\bibitem{FengZhang2020}
	Feng, M., Zhang, X.:
	On a $k$-Hessian equation with a weakly superlinear nonlinearity and singular weights.
	Nonlinear Anal. \textbf{190} 111601 (2020).
	
	\bibitem{FengZhang2023}
	Feng, M., Zhang, X.:
	The existence of infinitely many boundary blow-up solutions to the $p$-$k$-Hessian equation.
	Adv. Nonlinear Stud. \textbf{23}(1) 20220074 (2023).
	
	\bibitem{Figalli2017}
	Figalli, A.:
	The Monge-Amp\`{e}re Equation and Its Applications.
	Z\"{u}rich Lectures in Advanced Mathematics, European Mathematical Society (EMS), Z\"{u}rich (2017).
	
	\bibitem{Gauduchon1984}
	Gauduchon, P.: 
	La $1$-forme de torsion d'une vari\'{e}t\'{e} hermitienne compacte.
	Math. Ann. \textbf{267}(4) 495--518 (1984).
	
	\bibitem{Gutierrez2001}
	Guti\'{e}rrez, C.E.:
	The Monge-Amp\`{e}re Equation.
	Birkh\"{a}user, Boston (2001).
	
	\bibitem{Harada2015}
	Harada, J.:
	Blow-up behavior of solutions to the heat equation with nonlinear boundary conditions.
	Adv. Differ. Equ. \textbf{20}(1/2) 23--76 (2015).
	
	\bibitem{JiDengJiang2025}
	Ji, J., Deng, H., Jiang, F.:
	On the boundary blow-up problem for real $(n-1)$ Monge-Amp\`{e}re equation.
	Nonlinear Anal. \textbf{250} 113669 (2025).
	
	\bibitem{JiBao2010}
	Ji, X., Bao, J.:
	Necessary and sufficient conditions on solvability for Hessian inequalities.
	Proc. Amer. Math. Soc. \textbf{138}(1) 175--188 (2010).
	
	\bibitem{JiangJiLi2025}
	Jiang, F., Ji, J., Li, M.:
	Necessary and sufficient conditions on entire solvability for real $(n-1)$ Monge-Amp\`{e}re equation.
	Ann. Mat. Pura Appl. \textbf{204}(2) 447--476 (2025).
	
	
	
	\bibitem{Keller1957}
	Keller, J.B.:
	On solutions of $\Delta u=f(u)$.
	Comm. Pure Appl. Math. \textbf{10}(4) 503--510 (1957).
	
	\bibitem{LazerMcKenna1996}
	Lazer, A.C., McKenna, P.J.:
	On Singular Boundary Value Problems for the Monge-Amp\`{e}re operator.
	J. Math. Anal. Appl. \textbf{197}(2) 341--362 (1996).
	
	\bibitem{MaLi2019}
	Ma, S., Li, D.:
	Existence and boundary asymptotic behavior of large solutions of Hessian equations.
	Nonlinear Anal. \textbf{187} 1--17 (2019).
	
	\bibitem{McCueElHachemSimpson2022}
	McCue, S.W., El-Hachem, M., Simpson, M.J.:
	Traveling waves, blow-up, and extinction in the Fisher-Stefan model.
	Stud. Appl. Math. \textbf{148}(2) 964--986 (2022).
	
	
	
	\bibitem{Osserman1957}
	Osserman, R.:
	On the inequality $\Delta u \geq f(u)$.
	Pacific J. Math. \textbf{7} 1641--1647 (1957).
	
	\bibitem{TosattiWeinkove2016}
	Tosatti, V., Weinkove, B.:
	The Monge-Amp\`{e}re equation for $(n-1)$-plurisubharmonic functions on a compact K\"{a}hler manifold.
	J. Amer. Math. Soc. \textbf{30}(2) 311--346 (2016).
	
	\bibitem{TrudingerWang2008}
	Trudinger, N.S., Wang, X.-J.:
	The Monge-Amp\`{e}re equation and its geometric applications. 
	In: Handbook of Geometric Analysis, International Press,
	vol. I, pp. 467--524 (2008).
	
	\bibitem{BelowMailly2003}
	von Below, J., Mailly, G.P.:
	Blow up for reaction diffusion equations under dynamical boundary conditions.
	Comm. Partial Differ. Equ. \textbf{28}(1-2) 223--247 (2003).
	
	\bibitem{ZhangZhangWangAhmad2023}
	Zhang, L., Zhang, Y., Wang, G., Ahmad, B.:
	The multiplicity of radial $k$-convex solutions for an augmented Hessian equation.
	J. Math. Anal. Appl. \textbf{527}(1) 127408 (2023).
	
	\bibitem{ZhangDu2018}
	Zhang, X., Du, Y.:
	Sharp conditions for the existence of boundary blow-up solutions to the Monge-Amp\`{e}re equation.
	Calc. Var. Partial Differ. Equ. \textbf{57}(2) 30 (2018).
	
	\bibitem{ZhangFeng2018JMAA}
	Zhang, X., Feng, M.:
	Boundary blow-up solutions to the $k$-Hessian equation with a weakly superlinear nonlinearity.
	J. Math. Anal. Appl. \textbf{464}(1) 456--472 (2018).
	
	\bibitem{ZhangFeng2018NA}
	Zhang, X., Feng, M.:
	Boundary blow-up solutions to the $k$-Hessian equation with singular weights.
	Nonlinear Anal. \textbf{167} 51--66 (2018).
	
	\bibitem{ZhangFeng2019}
	Zhang, X., Feng, M.:
	The existence and asymptotic behavior of boundary blow-up solutions to the $k$-Hessian equation.
	J. Differ. Equ. \textbf{267}(8) 4626--4672 (2019).
	
	\bibitem{ZhangFeng2022}
	Zhang, X., Feng, M.:
	Blow-up solutions to the Monge-Amp\`{e}re equation with a gradient term: Sharp conditions for the existence and asymptotic estimates.
	Calc. Var. Partial Differ. Equ. \textbf{61}(6) 208 (2022).
	
	\bibitem{ZhangFeng2023}
	Zhang, X., Feng, M.:
	Boundary blow-up solutions to singular $k$-Hessian equations with gradient terms: Sufficient and necessary conditions and asymptotic behavior.
	J. Differ. Equ. \textbf{375} 475--513 (2023).
	
\end{thebibliography}

\end{document}